\newtheorem{thm}{Theorem}[section]
\newtheorem{lem}[thm]{Lemma}
\newtheorem{prop}[thm]{Proposition}
\newtheorem{cor}[thm]{Corollary}
\theoremstyle{definition}\newtheorem{definition}[thm]{Definition}
\theoremstyle{definition}
\newtheorem{defn}[thm]{Definition}
\newtheorem*{claim*}{Claim}
\newtheorem*{quest*}{Question}
\newtheorem*{remark*}{Remark}
\newtheorem*{fact*}{Fact}
\newcommand{\Z}{\ensuremath{\mathbb{Z}}}
\newcommand{\Q}{\ensuremath{\mathbb{Q}}}
\newcommand{\R}{\ensuremath{\mathbb{R}}}
\title{Secondary representation stability and the ordered configuration space of the once-punctured torus}
\author{Nicholas Wawrykow}
\date{}
\begin{document}

\maketitle
\begin{abstract}
In this paper we study stability patterns in the homology of the ordered configuration space of the once-punctured torus. In the last decade Church \cite{church2012homological} and Church--Ellenberg--Farb \cite{church2015fi} proved that the homology groups of the ordered configuration space of a connected noncompact orientable manifold stabilize in a representation theoretic sense as the number of points in the configuration grows, with respect to a map that adds each new point ``at infinity." Miller and Wilson \cite{miller2019higher} proved that there is a \emph{secondary representation stability} pattern among the unstable homology classes, with respect to adding a pair of orbiting points ``near infinity." This pattern is formalized by considering sequences of homology classes as FIM$^{+}$-modules. We prove that, as FIM$^{+}$-modules, the sequence of ``new" homology generators in the $n^{th}$ homology of the ordered configuration space of $2n-2$ points on the once-punctured torus is neither ``free" nor ``stably zero." We also show that this sequence is generated by homology classes on at most $4$ points. Our proof uses Pagaria's work on the Betti numbers of the ordered configuration space of the torus \cite{pagaria2020asymptotic} to calculate the growth rate of the Betti numbers of the ordered configuration space of the once-punctured torus. Our computations are the first to demonstrate that secondary representation stability is a non-trivial phenomenon in positive-genus surfaces.
%Namely, after introducing a pair of orbiting points and a relabeling by an action of the symmetric group some classes are homologous and others vanish; however, some nonzero homology classes will remain. 
% We also heavily rely on the arc resolution spectral sequence of Miller and Wilson \cite{miller2019higher}.
%Our proof begins by first determining the growth rate of the Betti numbers of the ordered configuration space of the once-punctured torus, a corollary to Pagaria's work on the Betti numbers of the ordered configuration space of the torus \cite{pagaria2020asymptotic}. Then, we compute some of the differentials of the arc resolution spectral sequence of Miller and Wilson \cite{miller2019higher}. A quick dimension count finishes the argument. 

\end{abstract}

\section{Introduction}

For a topological space $X$, let 
\[
F_{n}(X):=\{(x_{1}, \dots, x_{n})|x_{i}\in X, x_{i}\neq x_{j}\text{ if }i\neq j\}\subseteq X^{n}
\]
denote the ordered configuration space of $n$ distinct points on $X$. When $X=\R^{d}$, the homology groups of $F_{n}(\R^{d})$ are isomorphic to the space of $n$-ary operations of the operad $\mathcal{P}ois^{d}(n)$; for more see, for example, Sinha's expository paper \cite{sinha2006homology}. For most other manifolds explicit descriptions of the homology groups of their ordered configuration spaces are unknown. In his recent paper, Pagaria \cite[Corollary 2.9]{pagaria2020asymptotic} proved that for $k\ge 3$, the $k^{\text{th}}$ Betti number of the ordered configuration space of the torus is a polynomial of degree $2k-2$ in the number of marked points, and that the $0^{\text{th}}$, $1^{\text{st}}$, and $2^{\text{nd}}$ Betti numbers are polynomials of degree $0$, $1$, $3$, respectively, in the number of marked points. We use that result to prove

\begin{prop}\label{theTheorem}
Let $T^{\circ}$ denote the once-punctured torus. Then, for $k\ge 3$, the $k^{\text{th}}$ Betti number of $F_{n}(T^{\circ})$ is a polynomial in $n$ of degree $2k-2$. For $k=0,1,2$, the $k^{\text{th}}$ Betti number of $F_{n}(T^{\circ})$ is a polynomial in $n$ of degree $0,1,3$, respectively.
\end{prop}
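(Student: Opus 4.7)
The plan is to exploit the fact that $T=\R^{2}/\Z^{2}$ carries a topological group structure, which trivializes the projection $F_{n+1}(T)\to T$ onto the first coordinate. Identifying $T^{\circ}$ with $T\setminus\{0\}$, I define
\[
\Phi\colon F_{n+1}(T)\longrightarrow T\times F_{n}(T^{\circ}),\qquad (x_{0},x_{1},\dots,x_{n})\longmapsto\bigl(x_{0},\; x_{1}-x_{0},\;\dots,\;x_{n}-x_{0}\bigr),
\]
with inverse $(y;\,z_{1},\dots,z_{n})\mapsto (y,\,y+z_{1},\dots,y+z_{n})$. Because $x_{i}-x_{j}\neq 0$ iff $x_{i}\neq x_{j}$, the non-coincidence conditions on the two sides correspond bijectively, so $\Phi$ is a homeomorphism.

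By the Künneth formula the Poincaré polynomials then satisfy
\[
P_{F_{n+1}(T)}(t)\;=\;(1+t)^{2}\,P_{F_{n}(T^{\circ})}(t),
\]
equivalently $b_{k}(F_{n+1}(T))=b_{k}(F_{n}(T^{\circ}))+2b_{k-1}(F_{n}(T^{\circ}))+b_{k-2}(F_{n}(T^{\circ}))$ for every $k$. Inverting $(1+t)^{-2}=\sum_{m\ge 0}(-1)^{m}(m+1)t^{m}$ as a formal power series yields the closed expression
\[
b_{k}(F_{n}(T^{\circ}))\;=\;\sum_{j=0}^{k}(-1)^{k-j}(k-j+1)\,b_{j}(F_{n+1}(T)).
\]

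Next I substitute Pagaria's theorem, which says $b_{j}(F_{n+1}(T))$ is polynomial in $n$ of degree $2j-2$ for $j\ge 3$ and of degrees $0,1,3$ for $j=0,1,2$. For $k\ge 3$ the contribution from $j=k$ has degree $2k-2$; every other $j\le k$ contributes strictly smaller degree (for $j=k-1,k-2$ with $k=3,4$ one checks by hand, since the low indices have the exceptional degrees $0,1,3$). The top coefficient comes with sign $+1$, so it inherits the positive leading coefficient of $b_{k}(F_{n+1}(T))$, making $b_{k}(F_{n}(T^{\circ}))$ polynomial of degree exactly $2k-2$. For $k=0,1,2$ the formula collapses to an explicit combination whose degrees $0,1,3$ are read off directly.

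The whole argument really rests on a single observation, namely the product decomposition $F_{n+1}(T)\cong T\times F_{n}(T^{\circ})$ afforded by the group structure on $T$; everything else is a short generating-function manipulation followed by a comparison of polynomial degrees. No step is a serious obstacle, and in particular no spectral-sequence or local-coefficient issues arise, because Step~1 produces an honest homeomorphism rather than a mere fibration.
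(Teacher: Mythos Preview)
Your argument is correct and is essentially the same as the paper's own proof: both use the group structure on $T$ to obtain the homeomorphism $F_{n+1}(T)\cong T\times F_{n}(T^{\circ})$, invert $(1+t)^{2}$ as a formal power series to express $b_{k}(F_{n}(T^{\circ}))$ as the alternating sum $\sum_{j=0}^{k}(-1)^{k-j}(k-j+1)b_{j}(F_{n+1}(T))$, and then read off the polynomial degree from Pagaria's theorem. The only differences are cosmetic (indexing from $0$ versus $1$, writing $(1+t)^{2}$ versus $1+2t+t^{2}$).
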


If $X$ is an noncompact manifold such as the once-punctured torus, the ordered configuration spaces have additional structure: if $\dim(X)=d$, there is an embedding
\[
e:X\sqcup \R^{d}\hookrightarrow X. 
\]
Figure \ref{embeddingfig} provides an example of such an embedding (e.g., see Kupers and Miller \cite[Lemma 2.4]{kupers2015improved}).

\begin{figure}[h]
\centering
\labellist
\pinlabel {\fontsize{10}{40} $\R^2$} [c] at 284 70
\pinlabel {\fontsize{10}{40} $T^{\circ}$} [c] at 130 70
\pinlabel {\fontsize{10}{40} $\R^2$} [c] at 575 70
\pinlabel {\fontsize{10}{40} $T^{\circ}$} [c] at 500 95
\endlabellist
\includegraphics[width = 10cm]{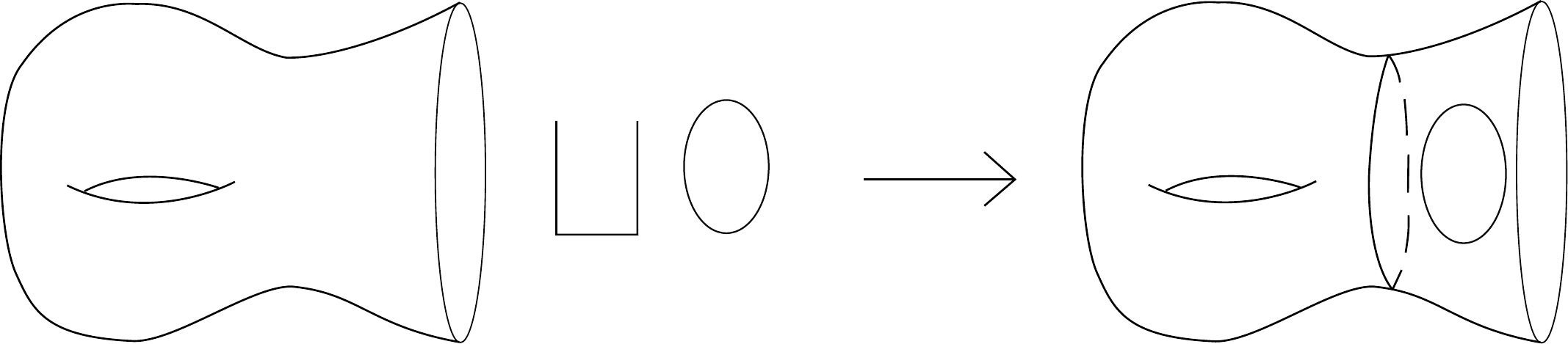}
\caption{An embedding $e:T^{\circ}\sqcup\R^{2}\hookrightarrow T^{\circ}$}
\label{embeddingfig}
\end{figure}

The embedding induces an inclusion of ordered configuration spaces
\[
\iota:F_{n-1}(X)\hookrightarrow F_{n}(X)
\]
\[
\iota(x_{1}, \dots, x_{n-1})\mapsto (e(x_{1}), \dots, e(x_{n-1}), e(0)),
\]
where $0$ denotes the origin in $\R^{d}$. See Figure \ref{firstinclusion}. Thus, $\iota$ maps a configuration of $n-1$ points in $X$ to its image under $e$, and it adds a new point corresponding to the image of the origin in $\R^{d}$ under $e$.

\begin{figure}[h]
\centering
\includegraphics[width = 10cm]{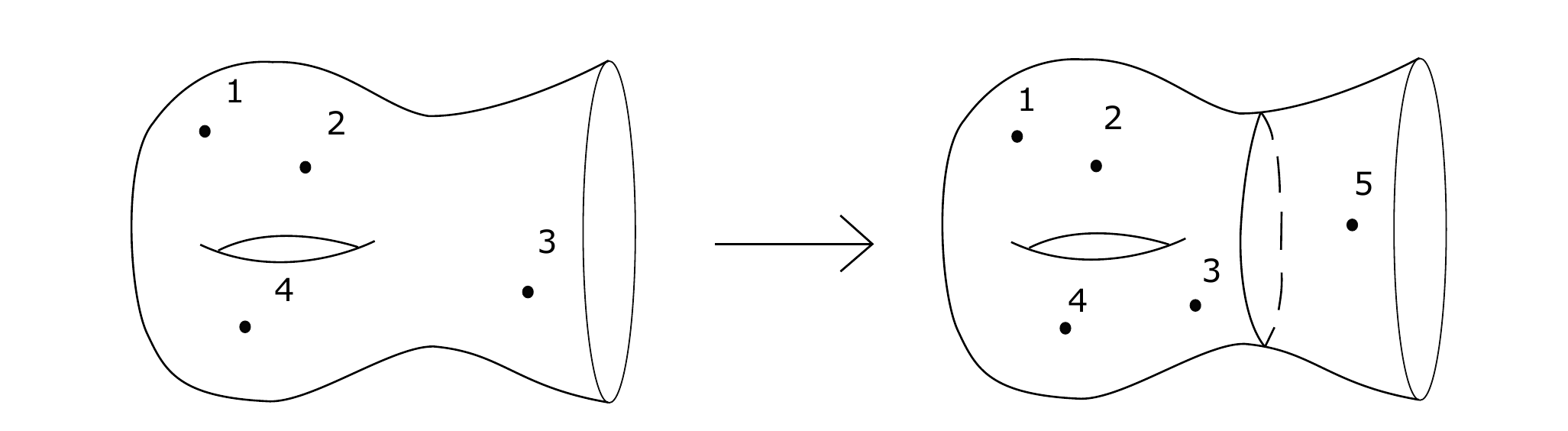}
\caption{The image of a point in $F_{4}(T^{\circ})$ under $\iota$}
\label{firstinclusion}
\end{figure}

The inclusion $\iota$ is an example of a more general result: the embedding $e$ induces inclusions on the product of the ordered configuration spaces of $X$ and $\R^{d}$
\[
F_{n}(X)\times F_{m}(\R^{d})\hookrightarrow F_{n+m}(X).
\]

In this paper we only consider one of these maps, which we will call $\iota'$ 
\[
\iota':F_{n-2}(X)\times F_{2}(\R^{d})\to F_{n}(X)
\]
where
\[
\iota'((x_{1}, \dots, x_{n-2}), (x'_{n-1}, x'_{n})) = (e(x_{1}), \dots, e(x_{n-2}), e(x'_{n-1}), e(x'_{n})).
\]

Figure \ref{doubleinclude} provides an example of $\iota'$.

\begin{figure}[H]
\centering
\includegraphics[width = 12cm]{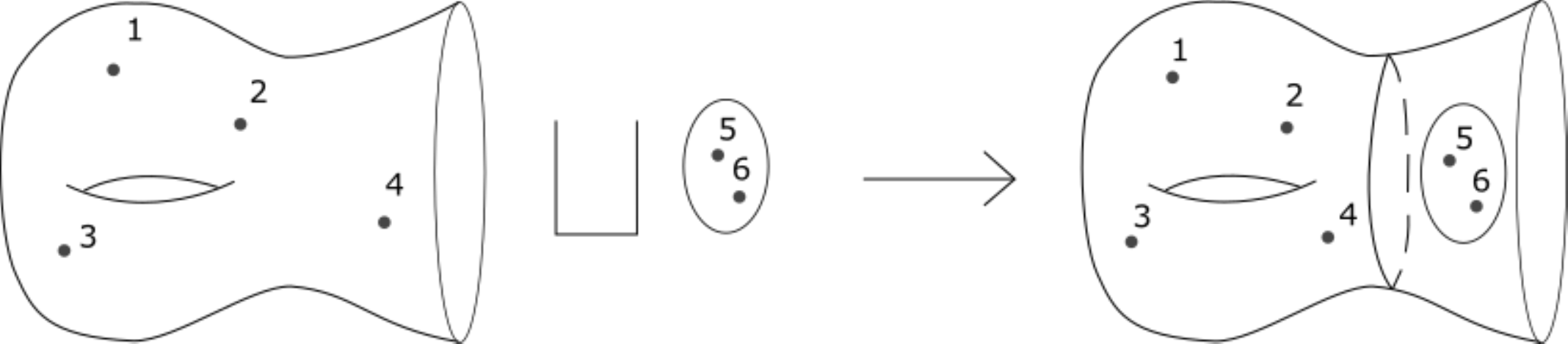}
\caption{The image of a point in $F_{4}(T^{\circ})\times F_{2}(\R^{2})$ under $\iota'$}
\label{doubleinclude}
\end{figure}

The inclusions $\iota$ and $\iota'$ induce maps on homology:
\[
\iota_{*}:H_{k}(F_{n-1}(X))\to H_{k}(F_{n}(X))\indent\text{and}\indent\iota'_{*}:H_{k-1}(F_{n-2}(X))\otimes H_{1}(F_{2}(\R^{d}))\to H_{k}(F_{n}(X)).
\]

The symmetric group $S_{n}$ acts on the configuration space $F_{n}(X)$ by permuting the coordinates, and this induces an action of $S_{n}$ on $H_{k}(F_{n}(X);\Z)$ for all $k$. Thus, we can view the homology groups of the ordered configuration space on $X$ as symmetric group modules. When $n$ is large with respect to $k$, the homology groups of the ordered configuration space of a noncompact manifold $X$ are representation stable in the following sense.

\begin{thm}
(Church--Ellenberg--Farb \cite[Theorem 6.4.3]{church2015fi} in the orientable case and Miller--Wilson \cite[Theorem 3.12]{miller2019higher} in the general case) Let $X$ be a connected noncompact $d$-manifold with $d\ge 2$. For $k\le\frac{n-1}{2}$,
\[
\Z[S_{n}]\cdot \iota_{*}(H_{k}(F_{n-1}(X);\Z))=H_{k}(F_{n}(X);\Z).
\]
\end{thm}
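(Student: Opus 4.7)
The plan is to recast the theorem as a generation-degree bound on FI-modules and then derive the explicit $S_n$-surjectivity. Organizing the sequence $\{H_k(F_n(X);\Z)\}_n$ together with the maps $\iota_*$ and the symmetric group actions into an FI-module $V_k$, the statement reduces to: $V_k$ is generated in degrees $\le 2k$. Indeed, if $v\in V_{k,m}$ is a generator with $m\le 2k\le n-1$ and $f:[m]\hookrightarrow[n]$ is any injection, then $f$ factors as the standard inclusion $[m]\hookrightarrow[n-1]\hookrightarrow[n]$ composed with a permutation in $S_n$, so $f_*v\in\Z[S_n]\cdot\iota_*(V_{k,n-1})$, which gives the claimed equality.

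To put the FI-module structure on $V_k$, I would iterate $e$ to obtain embeddings $X\sqcup\bigsqcup_{i=1}^m\R^d\hookrightarrow X$ for each $m$; any injection $[m]\hookrightarrow[n]$ then determines an inclusion $F_m(X)\hookrightarrow F_n(X)$ well-defined up to isotopy, hence a well-defined map on $H_*$. Composition and identities are respected up to isotopy, so $V_k$ is genuinely an FI-module and the maps $\iota_*$ correspond to the standard inclusions on objects.

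The heart of the argument is the generation-degree bound. In the orientable case I would use the Kri\v{z}--Totaro rational model for $H^*(F_n(X))$, whose diagonal generators $G_{ij}$ are supported on the loci $x_i=x_j$; any class of cohomological degree $k$ involves at most $2k$ entangled indices, and the remaining coordinates can be freely permuted by $S_n$, which, after dualizing, is precisely generation in degree $\le 2k$. For integral coefficients and in the non-orientable case, where no such rational model is available, I would instead run a Miller--Wilson style arc-resolution spectral sequence whose $E^1$-page is built from $H_{k-1}(F_{n-2}(X))\otimes H_1(F_2(\R^d))$ and lower-complexity terms, and induct on $k$ to obtain the same bound.

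The main obstacle is the sharpness of the constant $2k$. A crude bound of the form $f(k)$ follows quickly from any finite-generation argument, but pinning down the precise linear constant requires careful spectral-sequence bookkeeping: one must track how classes appearing unstably at level $n$ descend from classes at level $n-2$ via the secondary inclusion $\iota'$, and control the differentials that would otherwise push the generation degree up. Once the $2k$ bound is in hand, the theorem follows from the formal unwinding of the first paragraph.
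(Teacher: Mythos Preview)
The paper does not prove this theorem: it is quoted as background, with the orientable case attributed to Church--Ellenberg--Farb \cite[Theorem~6.4.3]{church2015fi} and the general case to Miller--Wilson \cite[Theorem~3.12]{miller2019higher}. There is therefore no proof in the paper to compare your proposal against.

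That said, your outline correctly identifies the formal reduction: the statement is equivalent to the FI-module $H_k(F_\bullet(X);\Z)$ being generated in degree $\le 2k$, and your factorization argument in the first paragraph is the standard way to deduce the $S_n$-surjectivity from that bound. Your sketch of the FI-structure is also fine and matches how the literature sets it up.

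Where the proposal is thin is exactly where you flag it: the generation-degree bound itself. A couple of cautions. First, the Kri\v{z}--Totaro model is a \emph{rational} cdga, so it cannot by itself yield the integral statement even in the orientable case; Church--Ellenberg--Farb obtain the integral bound via the Totaro/Leray spectral sequence for $F_n(X)\hookrightarrow X^n$ and an FI\#-module argument, not by dualizing a rational model. Second, your description of the arc-resolution $E^1$-page as ``built from $H_{k-1}(F_{n-2}(X))\otimes H_1(F_2(\R^d))$'' conflates the secondary-stability mechanism with the primary one; the relevant input for the $2k$ bound is a high-connectivity statement for the arc complex together with an inductive analysis of the columns, and the secondary map $\iota'$ plays no role in establishing first-order stability. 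As written, your proposal is a plausible roadmap pointing at the right references, but the substantive steps are not carried out.
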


We define another map, which we also denote $\iota'_{*}$, that leads to a notion of secondary representation stability
\[
\iota'_{*}:H_{k-1}(F_{n-2}(X))\to H_{k}(F_{n}(X)).
\]
This map can viewed as a restriction of the other map $\iota'_{*}$, and is obtained by pairing a homology class in $H_{k-1}(F_{n-2}(X))$ with the class in $H_{1}(F_{2}(\R^{d}))$ corresponding to the point $n$ orbiting the point labeled $n-1$ counterclockwise. Miller and Wilson \cite{miller2019higher} were able to show that if $n$ is large with respect to $k$, the homology groups of the ordered configuration space of a noncompact manifold were secondary representation stable in the sense that they satisfied the following theorem.

\begin{thm}\label{jjThm}
(Miller--Wilson \cite[Theorem 1.2]{miller2019higher}) Let $X$ be a connected noncompact finite type $d$-manifold, with $d\ge 2$. There is a function $r:\Z_{\ge 0}\to \Z_{\ge 0}$ tending to infinity such that for $k\le \frac{n-1}{2}+r(n)$,
\[
\Q[S_{n}]\cdot\left(\iota_{*}(H_{k}(F_{n-1}(X);\Q))+\iota'_{*}(H_{k-1}(F_{n-2}(X), \Q))\right)= H_{k}(F_{n}(X); \Q).
\]
\end{thm}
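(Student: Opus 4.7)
The plan is to reinterpret $\bigoplus_{n \ge 0} H_k(F_n(X); \Q)$ as a module over a combinatorial category encoding both the primary stabilization $\iota$ and the secondary stabilization $\iota'$ — essentially the FIM$^{+}$-structure alluded to in the introduction — and to prove that this module is finitely generated in the low-complexity part through a spectral-sequence resolution. The geometric input is a semi-simplicial space $W_\bullet(X, n)$ augmented over $F_n(X)$ whose $p$-simplices record $p + 1$ pairwise disjoint arcs from a fixed puncture ``at infinity'' (available because $X$ is noncompact) to either a single configuration point (a primary extraction) or to an unordered pair of configuration points that can be isotoped to orbit one another (a secondary extraction).

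Concretely, one arranges matters so that the stratum of $p$-simplices is a disjoint union of twisted bundles over configuration spaces on strictly fewer points, with fiber capturing the local $F_a(\R^d)$ data (for $a \le p + 1$) coming from the extracted arc ends. The resulting homology spectral sequence has $E^1$-page assembled from $H_*(F_{n-a}(X); \Q) \otimes H_*(F_a(\R^d); \Q)$. The column corresponding to a single primary extraction contributes precisely $\iota_*(H_k(F_{n-1}(X); \Q))$, while the column corresponding to one secondary extraction contributes precisely $\iota'_*(H_{k-1}(F_{n-2}(X); \Q))$ after pairing with the linking generator of $H_1(F_2(\R^d)) \cong \Q$. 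Provided $W_\bullet(X, n)$ is sufficiently connected, the spectral sequence shows that the $\Q[S_n]$-span of these two columns surjects onto $H_k(F_n(X); \Q)$, which is the stated conclusion.

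The hard step, which is also where the function $r(n)$ is produced, is the connectivity estimate on $W_\bullet(X, n)$. I would follow the Hatcher--Vogtmann / Randal-Williams--Wahl arc-complex philosophy: to null-homotope a given sphere in $W_\bullet$, perform surgery along ``bad'' intersections with a fixed reference configuration, reducing to the same statement on $F_{n-1}(X)$ or $F_{n-2}(X)$ and closing an induction on $n$. Noncompactness of $X$ provides the room needed for the arc surgeries, and $d \ge 2$ ensures that orbiting pairs of points yield a nontrivial linking class in $H_1(F_2(\R^d))$ that can serve as the ``secondary'' element. The extra connectivity gained by allowing $1$-simplices to absorb two points at once, beyond what $0$-simplices alone can do, is precisely what shifts the range from the primary slope $\frac{n-1}{2}$ to $\frac{n-1}{2} + r(n)$; tracking the surgery carefully should yield an explicit unbounded $r$.

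Rational coefficients enter essentially so that the $S_n$-representations on the $E^1$-page split as direct sums of induced representations and the $d_1$ differential can be analyzed via transfer/averaging, cleanly isolating the two leftmost columns as $\Q[S_n]$-translates of $\iota_*$ and $\iota'_*$ images rather than merely of more general arc-complex classes. Combining the connectivity bound on $W_\bullet(X, n)$ with the spectral sequence then yields the claimed equality through $k \le \frac{n-1}{2} + r(n)$, with $r(n) \to \infty$. The main obstacle throughout is the connectivity estimate: both the construction of a ``correct'' $W_\bullet$ (one whose $p = 0$ and $p = 1$ columns exactly recover the images of $\iota_*$ and $\iota'_*$) and the inductive arc-surgery argument are delicate, and without the secondary vertices the best range available is merely the primary range.
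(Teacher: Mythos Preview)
The paper does not prove this theorem; it is quoted verbatim from Miller--Wilson \cite[Theorem~1.2]{miller2019higher}. The only hint the paper gives about the proof appears after the closely related Theorem~\ref{secondStab}: Miller and Wilson ``used the complex of injective words, the arc resolution spectral sequence, and a Noetherianity result of Nagpal, Sam, and Snowden.''

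Your proposal is a different strategy from theirs, and the difference is not cosmetic. You build a single semi-simplicial resolution whose vertices already carry \emph{both} primary (single-point) and secondary (orbiting-pair) extractions, and you plan to obtain the range $\frac{n-1}{2}+r(n)$ directly from a connectivity estimate on that complex. Miller--Wilson's arc resolution has \emph{only} single-point extractions; its $E^{2}$-page is exactly the one displayed in Proposition~\ref{arcresprop} of the present paper, with $(-1)$-column $H_{0}^{\mathrm{FI}}(H_{q}(F(M)))_{n}=\mathcal{W}^{M}_{i}(n)$. The secondary map $\iota'_{*}$ enters algebraically, not geometrically: the $d^{2}$ differential from the $1$-column to the $(-1)$-column \emph{is} the FIM$^{+}$-structure map (Proposition~\ref{d2isFIM} here). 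The function $r(n)$ is then produced by the Nagpal--Sam--Snowden Noetherianity theorem for FIM$^{+}$-modules, which forces each $\mathcal{W}^{X}_{i}$ to be finitely generated; it does not come from any connectivity bound beyond the primary one.

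This is exactly where your sketch has a genuine gap. The step you yourself flag as ``the hard step''---a Hatcher--Vogtmann/Randal-Williams--Wahl style connectivity estimate for a complex whose vertices may swallow \emph{two} configuration points at once---is not known, and Miller--Wilson's $r(n)$ is ineffective precisely because they bypass it via Noetherianity. Your assertion that ``tracking the surgery carefully should yield an explicit unbounded $r$'' is therefore not a routine verification but an open problem: arc-surgery arguments do not obviously adapt when a single simplex removes a pair, and no explicit secondary range is available in the literature. As written, your outline is a plausible research program rather than a proof, and it does not reproduce the argument the theorem is attributed to.
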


Combining first and second order representation stability we see that there is a range where the $k^{th}$ homology group of the ordered configuration space of $n$ points on $X$ is determined by the $(k-1)^{th}$ homology group of $n-2$ points on $x$ and the $k^{th}$ homology group of $n-1$ points on $X$. See Figure \ref{fig}. In Section \ref{stab} we will see that secondary representation stability can be thought of as a condition on the generation degree of certain ``FIM$^{+}$-modules." \\

\begin{figure}[h]
\centering
\labellist
\pinlabel {\fontsize{10}{40} $k = n$} [c] at 180 200
\pinlabel {\fontsize{10}{40} $k = \frac{n}{2}$} [c] at 380 195
\pinlabel {\fontsize{10}{40} homological degree $k$} [c] at -100 100
\pinlabel {\fontsize{10}{40} Number of points in the ordered configuration space $n$} [c] at 200 -15
\pinlabel {\fontsize{10}{40} homology vanishes} [c] at 75 160
\pinlabel {\fontsize{10}{40} First order representation stable range} [c] at 520 10
\pinlabel {\fontsize{10}{40} FIM$^{+}$-module} [c] at 520 70
\pinlabel {\fontsize{10}{40} Second order representation stable range} [c] at 520 130
\endlabellist
\includegraphics[width = 10cm]{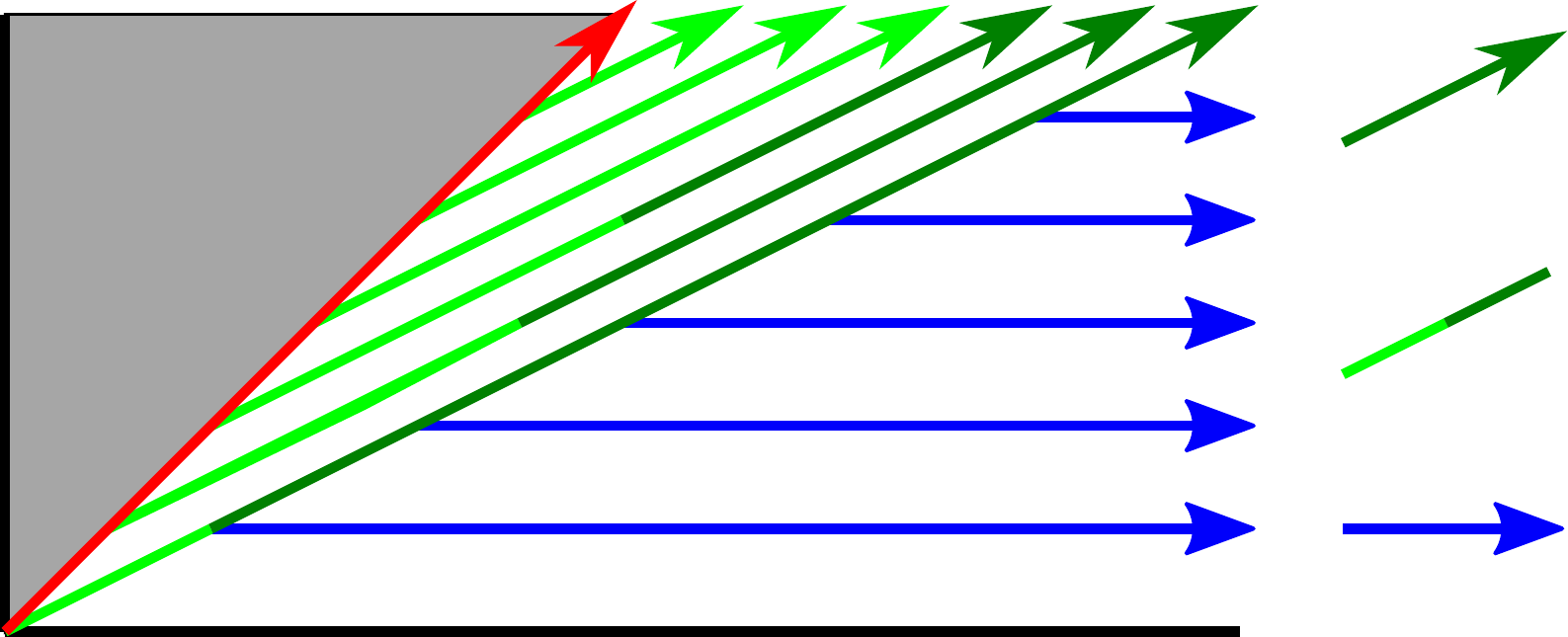}
\caption{First and second order representation stable ranges for surfaces. Compare with \cite[Figure 10]{miller2019higher}}
\label{fig}
\end{figure}

Miller and Wilson also calculated which parts of $H_{k}(F_{n}(X); \Q)$ came from just the $\Q[S_{n}]$-span of the image of $\iota'_{*}$ in several cases.

\begin{prop}
\label{r2structure}
(Miller--Wilson \cite[Proposition 3.33]{miller2019higher}) 
\[
\left(\Q[S_{2n}] \cdot\iota'_{*}(H_{n-1}(F_{2n-2}(\R^{2});\Q))\right)/ \left(\Q[S_{2n}]\cdot\iota_{*}(H_{n}(F_{2n-1}(\R^{2});\Q))\right) \cong\bigoplus_{\lambda\in D_{2n}}V_{\lambda}
\]
as $S_{2n}$ representations, where $V_{\lambda}$ is the irreducible $S_{2n}$-representation associated to the partition $\lambda$, and $\lambda$ is in $D_{2n}$, if and only if when the associated Young diagram is cut in two along the upper staircase, the resultant two skew subdiagrams are symmetric under reflection in the line of slope $-1$. 
\end{prop}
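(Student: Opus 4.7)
The plan is to leverage the explicit structure of $H_*(F_n(\R^2); \Q)$ as an $S_n$-representation. By Arnold's presentation of $H^*(F_n(\R^2); \Q)$ and the Lehrer--Solomon decomposition, $H_k(F_n(\R^2); \Q)$ is a direct sum of representations induced from products of cyclic subgroups, and the top group $H_{n-1}(F_n(\R^2); \Q)$ is a sign twist of the Lie representation $\mathrm{Lie}(n) = \mathrm{Ind}_{C_n}^{S_n}\chi$. In particular, the source $H_{n-1}(F_{2n-2}(\R^2); \Q)$ is determined as a (twisted) Lie representation of $S_{2n-2}$, and $H_n(F_{2n-1}(\R^2); \Q)$ admits a similar cycle-type decomposition.

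Next, I would give a Pontryagin-product description of $\iota'_*$: since $\iota'$ places the new points on a small disk ``at infinity'' with point $2n$ orbiting point $2n-1$, its image in $H_n(F_{2n}(\R^2); \Q)$ is obtained by multiplying any $x \in H_{n-1}(F_{2n-2}(\R^2); \Q)$ by the homology class $\beta_{2n-1,2n}$ dual to the Arnold generator $\omega_{2n-1,2n}$. Thus
\[
\Q[S_{2n}] \cdot \iota'_*\bigl(H_{n-1}(F_{2n-2}(\R^2); \Q)\bigr) \;=\; \mathrm{Ind}_{S_{2n-2}\times S_2}^{S_{2n}}\!\bigl(H_{n-1}(F_{2n-2}(\R^2);\Q) \boxtimes \langle \beta_{2n-1,2n}\rangle\bigr)\big/(\text{Arnold relations}),
\]
and similarly $\Q[S_{2n}] \cdot \iota_*(H_n(F_{2n-1}(\R^2); \Q))$ is generated by induction from $S_{2n-1}\times S_1$. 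I would translate both sides into symmetric functions via the Frobenius characteristic map, using the fact that the characteristic of $\mathrm{Lie}(n)$ is the Lie symmetric function $\ell_n$, and then compute the Schur expansion of the quotient character.

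The main obstacle will be matching this Schur expansion with the combinatorial set $D_{2n}$. I would attempt a direct bijection: the ``upper staircase cut'' in $\lambda$ should encode the split of the $2n$ labels into the $2n-2$ points carrying the Lie-class and the two orbiting points, while the antidiagonal reflection symmetry of each skew piece should reflect the cyclic antisymmetry of Lie brackets. Producing this bijection rigorously likely requires a sign-reversing involution on the basis indexed by non-$D_{2n}$ partitions, perhaps a swap of rows and columns across the staircase, to kill their contribution in the quotient; alternatively, one could apply the Murnaghan--Nakayama rule to the plethystic expression $\ell_{2n-2}[\text{Ind data}]$ and identify fixed-point classes directly. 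I would cross-check the resulting dimension against the hook-content formula and verify the small cases $n = 1, 2, 3$ explicitly to confirm the match.
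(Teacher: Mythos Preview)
This paper does not contain a proof of this proposition: it is quoted verbatim from Miller--Wilson \cite[Proposition 3.33]{miller2019higher} and used as background. There is therefore no ``paper's own proof'' to compare your proposal against; the result is simply imported from the literature.

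As for your sketch on its own merits: the ingredients you list (Arnold's presentation, the Lehrer--Solomon decomposition identifying $H_{n-1}(F_n(\R^2);\Q)$ with a sign-twisted $\mathrm{Lie}(n)$, and the Pontryagin-product description of $\iota'_*$) are the right ones, and translating to symmetric functions via Frobenius characteristic is a natural route. However, the heart of the argument---matching the Schur expansion with the combinatorial condition defining $D_{2n}$---is left entirely speculative (``I would attempt a direct bijection \ldots perhaps a swap of rows and columns''). In Miller--Wilson's treatment this identification goes through the known decomposition of $M^{\mathrm{FIM}^+}(0)_{2n}$ as an $S_{2n}$-representation, which in turn rests on work of Jan Saxl and others on the induced representation from the hyperoctahedral subgroup; the staircase-cut symmetry condition is a classical characterization of the constituents of that induced module. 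Your proposal does not indicate awareness of this representation-theoretic input, and without it the bijection you hope for has no obvious mechanism. So while the setup is sound, the proposal as written has a genuine gap at exactly the point where the nontrivial combinatorics enters.
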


\begin{prop}\label{someFIMmods}
\label{trivialstuff}
(Miller--Wilson \cite[Proposition 3.35]{miller2019higher}) Let $X$ be a connected noncompact surface. If $X$ is not orientable or of genus greater than zero, then
\[
H_{0}(F_{0}(X);\Q) \cong \Z,
\]
and for $n>0$
\[ 
\left(\Q[S_{2n}] \cdot\iota'_{*}(H_{n-1}(F_{2n-2}(X);\Q))\right)/ \left(\Q[S_{2n}]\cdot\iota_{*}(H_{n}(F_{2n-1}(X);\Q))\right) \cong 0.
\]
\end{prop}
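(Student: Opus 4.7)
The trivial identification $H_{0}(F_{0}(X);\Q)\cong\Q$ is immediate from $F_{0}(X)$ being a point, so the content is entirely in the $n>0$ statement. The plan for the latter exploits the fact that any noncompact connected surface $X$ which is nonorientable or of positive genus satisfies $H_{1}(X;\Q)\neq 0$: fix an embedded loop $\gamma\subset X$ representing a nonzero class $[\gamma]\in H_{1}(X;\Q)$, chosen disjoint from the neighborhood of infinity used to define $\iota$ and $\iota'$. I will use $[\gamma]$ to realize every element of $\iota'_{*}(H_{n-1}(F_{2n-2}(X);\Q))$ as an $S_{2n}$-combination of elements of $\iota_{*}(H_{n}(F_{2n-1}(X);\Q))$.

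First, given $\alpha\in H_{n-1}(F_{2n-2}(X);\Q)$, I would push $\alpha$ into $H_{n-1}(F_{2n-2}(X\setminus\gamma);\Q)$; this should be possible because $X$ is noncompact, so top-degree configuration cycles can be isotoped into the complement of $\gamma$. Taking the external product with $[\gamma]$, via the inclusion $F_{2n-2}(X\setminus\gamma)\times\gamma\hookrightarrow F_{2n-1}(X)$, then yields a class $\alpha\times[\gamma]\in H_{n}(F_{2n-1}(X);\Q)$, and hence $\iota_{*}(\alpha\times[\gamma])\in H_{n}(F_{2n}(X);\Q)$.

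The key comparison step is to show that $\iota'_{*}(\alpha)$ agrees, modulo the primary image and after averaging over an appropriate coset of $S_{2n}$, with $\iota_{*}(\alpha\times[\gamma])$ in $H_{n}(F_{2n}(X);\Q)$. Both classes are represented by the $\alpha$-cycle augmented by two extra strands: in $\iota'_{*}(\alpha)$ the new pair orbits one another near infinity, while in $\iota_{*}(\alpha\times[\gamma])$ one new strand sits near infinity and the other traces $\gamma$. In a Kriz/Totaro-style model for $H^{*}(F_{2n}(X);\Q)$, the ``orbit'' class $\beta_{2n-1,2n}$ pulled in from $H^{1}(F_{2}(\R^{2}))$ is, via the Arnold/Kriz relations together with the nontriviality of $H^{1}(X)$, expressible in terms of single-strand $H^{1}(X)$-classes on strand $2n-1$ or $2n$, plus primary contributions coming from the $\alpha$-cycle. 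Dualizing via Poincar\'e--Lefschetz and $S_{2n}$-symmetrizing should then promote this algebraic identity to the required containment of images.

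The principal obstacle is making the comparison step precise at the model level, ensuring that the identity is simultaneously $S_{2n}$-equivariant and uniform across the orientable and nonorientable cases. The nonorientable setting in particular may require passing to the orientation double cover or invoking a variant of the Kriz/Totaro model appropriate to nonorientable surfaces, since the standard presentation assumes orientability; one might also need to split into cases depending on whether $\gamma$ is one-sided or two-sided and to track signs carefully in the nonorientable case.
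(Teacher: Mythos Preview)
This proposition is not proved in the paper: both in the introduction and in Section~\ref{stab} it is stated purely as a citation of Miller--Wilson \cite[Proposition~3.35]{miller2019higher}, with no argument supplied. So there is no ``paper's own proof'' to compare your sketch against. For the particular case $X=T^{\circ}$ the vanishing does fall out of the paper's Proposition~\ref{topgen}, since $b_{k}(F_{n}(T^{\circ}))$ is a polynomial in $n$ of degree $2k-2<2k$ and hence $H_{0}^{\mathrm{FI}}(H_{k}(F(T^{\circ})))_{2k}=0$; but that is a byproduct of the Betti-number computation, not an argument in the generality of the proposition.

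On your sketch itself: the instinct that $H_{1}(X;\Q)\neq 0$ is the operative hypothesis is correct, but there is a genuine gap at the first step. You claim that any $\alpha\in H_{n-1}(F_{2n-2}(X);\Q)$ can be lifted to $H_{n-1}(F_{2n-2}(X\setminus\gamma);\Q)$ because ``top-degree configuration cycles can be isotoped into the complement of $\gamma$.'' But $n-1$ is not the top homological degree of $F_{2n-2}(X)$ (that is $2n-2$), and the inclusion $F_{2n-2}(X\setminus\gamma)\hookrightarrow F_{2n-2}(X)$ is not surjective on homology in general: for instance, if $X=T^{\circ}$ and $\gamma$ is a nonseparating curve representing the class $a$, then the class ``point $1$ traverses $b$'' in $H_{1}(F_{2}(T^{\circ}))$ has no preimage in $H_{1}(F_{2}(T^{\circ}\setminus\gamma))$. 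Your comparison step is then built on this unavailable lift and is in any case only a heuristic---you gesture at Kriz/Totaro relations without specifying which identity expresses the orbit generator in terms of single-strand classes, and you yourself flag that the nonorientable case would require a different model. Whatever Miller--Wilson actually do, it cannot begin by pushing an arbitrary $(n{-}1)$-cycle off a fixed curve.
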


In the first example, the structure of the homology groups had previously been determined, and, as we will see, this quotient is a ``free FIM$^{+}$-module"; in the second case the classes of interest stabilize to zero. This raises the question: are there spaces such that the homology arising solely from the $\Q[S_{n}]$-span of $\iota_{*}'$ of their ordered configuration space are secondary representation stable, while being neither free as FIM$^{+}$-modules nor eventually zero?

We answer this in the affirmative. The sequence of $n^{\text{th}}$ homology group generators of the the ordered configuration space of $2n-2$ points on the once-punctured torus $T^{\circ}$ is secondary representation stable, while being neither ``free" nor eventually zero. 

\begin{thm}
\label{mytheorem}
Let $T^{\circ}$ denote the once-punctured torus. For $n\ge 2$,t
\[
\Q[S_{2n-2}]\cdot\iota_{*}(H_{n}(F_{2n-3}(T^{\circ});\Q))\neq H_{n}(F_{2n-2}(T^{\circ});\Q).
\]
For $n\ge4$,
\[
\Q[S_{2n-2}]\cdot\left(\iota_{*}(H_{n}(F_{2n-3}(T^{\circ});\Q))+\iota'_{*}(H_{n-1}(F_{2n-4}(T^{\circ}), \Q))\right) = H_{n}(F_{2n-2}(T^{\circ});\Q).
\]
Moreover, the sequence
\[
\left\{\left(\Q[S_{2n-2}]\iota'_{*}(H_{n-1}(F_{2n-4}(T^{\circ}), \Q))\right)/\left(\Q[S_{2n-2}]\iota_{*}(H_{n}(F_{2n-3}(T^{\circ}), \Q))\right)\right\}_{n\in \Z_{\ge 1}}
\]
is not a free FIM$^{+}$-module.
\end{thm}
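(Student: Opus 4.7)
The plan is to derive all three assertions from the polynomial Betti-number growth in Proposition \ref{theTheorem}, combined with standard input from FI- and FIM$^{+}$-module theory. Proposition \ref{theTheorem} says that the $k$-th Betti number $b_{k}(F_{m}(T^{\circ}))$ is a polynomial in $m$ of degree $2k-2$ for $k \geq 3$; setting $k = n$ and specialising at $m = 2n-2$ and $m = 2n-3$ pins down the growth rates, in $n$, of the two Betti numbers appearing in the theorem. Leading coefficients should be extractable from the proof of Proposition \ref{theTheorem}, which is built on Pagaria's formulas for the closed torus.

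For the failure of first-order surjectivity (asserted for all $n \geq 2$), the image $\iota_{*}(H_{n}(F_{2n-3}(T^{\circ});\Q))$ is invariant under the copy of $S_{2n-3}$ in $S_{2n-2}$ fixing the newly added marked point, so its $S_{2n-2}$-span has dimension at most $(2n-2)\,b_{n}(F_{2n-3}(T^{\circ}))$ and, by the branching rule, contains only $S_{2n-2}$-irreducibles $V_{\lambda}$ whose Young diagram is obtained by adding one box to a partition of $2n-3$ that already appears in $H_{n}(F_{2n-3}(T^{\circ}))$. The case $n = 2$ is immediate, since $H_{2}(F_{1}(T^{\circ})) = H_{2}(T^{\circ}) = 0$. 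For general $n$ a pure dimension count is borderline (the upper bound exceeds the target by an asymptotic factor of $(2n-2)/e$), so I expect the argument to proceed by exhibiting an explicit $S_{2n-2}$-irreducible appearing in $H_{n}(F_{2n-2}(T^{\circ}))$ whose Young diagram cannot be reached by adding a single box to any partition supporting $H_{n}(F_{2n-3}(T^{\circ}))$, via a character-level computation seeded in Proposition \ref{theTheorem}. Producing this obstructing irreducible is, in my view, the main obstacle.

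The secondary surjectivity for $n \geq 4$ I would obtain from Miller--Wilson's Theorem \ref{jjThm}, verifying that the pair $(\text{degree},\text{number of points}) = (n, 2n-2)$ lies in their stable range for every $n \geq 4$; if the stability function $r$ there does not automatically cover the smallest values, I would handle those by direct computation using Proposition \ref{theTheorem}. Finally, for the non-freeness of the FIM$^{+}$-module, I would compute $\dim Q_{2n-2}$ (where $Q_{2n-2}$ denotes the stated quotient) by using Proposition \ref{theTheorem} together with the $\iota_{*}$-image bounds from the first part, and then compare with the rigid polynomial-growth profile of a free FIM$^{+}$-module generated in bounded degree. A free FIM$^{+}$-module's dimension profile is determined by its generators, so any quantitative mismatch with the Pagaria-derived numbers must come from non-trivial FIM$^{+}$-relations, forcing non-freeness.
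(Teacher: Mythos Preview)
Your plan has two genuine gaps, one for each of the first two assertions.

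For the first assertion, you correctly observe that the naive dimension bound $(2n-2)\,b_{n}(F_{2n-3}(T^{\circ}))$ overshoots $b_{n}(F_{2n-2}(T^{\circ}))$, and you propose to find an obstructing irreducible by a character computation---but you do not say how, and in fact this is unnecessary. The key structural input you are missing is that $T^{\circ}$ is noncompact, so $H_{k}(F(T^{\circ});\Q)$ is not merely an FI-module but an FI\#-module; by Church--Ellenberg--Farb (Theorem~\ref{equiv}) it therefore decomposes as $\bigoplus_{d}M(W_{d})$ with $\dim M(W_{d})_{m}=\dim(W_{d})\binom{m}{d}$. Hence the degree of the Betti polynomial equals the top $d$ with $W_{d}\neq 0$, and Proposition~\ref{theTheorem} gives that this top $d$ is exactly $2k-2$. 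Taking $k=n$ yields $H_{0}^{\text{FI}}(H_{n}(F(T^{\circ})))_{2n-2}=W_{2n-2}\neq 0$ immediately, which is precisely the statement that $\Q[S_{2n-2}]\cdot\iota_{*}$ is not surjective. No irreducible needs to be exhibited.

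For the second assertion, your plan to invoke Theorem~\ref{jjThm} and check that $(n,2n-2)$ lies in the stable range does not work: the function $r$ in that theorem comes from a Noetherianity argument and is not effective, so you cannot verify the inequality for any specific $n$, let alone all $n\ge 4$. The paper instead works directly with the arc resolution spectral sequence (Proposition~\ref{arcresprop}). Using the FI\#-decomposition above, one shows that for the relevant $n$ all entries on the diagonal feeding into $E^{r}_{-1,\frac{n+2}{2}}[T^{\circ}](n)$ vanish except the $d^{2}$-source, and since $E^{\infty}_{-1,\frac{n+2}{2}}=0$ this forces $d^{2}$ to be surjective; by Proposition~\ref{d2isFIM} that $d^{2}$ is exactly the FIM$^{+}$-structure map. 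The boundary case (your $n=4$, i.e.\ six points) requires an additional argument: one first proves $\mathcal{W}_{1}^{T^{\circ}}$ is generated in degree~$1$ via an explicit geometric nullhomology (Proposition~\ref{W1}), then feeds this into the Leibniz rule (Lemma~\ref{Leibniz}) to kill the extra $E^{3}$-term. Your dimension-comparison idea for non-freeness is correct in spirit; the paper carries it out concretely by computing $\dim\mathcal{W}_{2}^{T^{\circ}}(2)=5$ and $\dim\mathcal{W}_{2}^{T^{\circ}}(4)=14<30=\dim M^{\text{FIM}^{+}}(\mathcal{W}_{2}^{T^{\circ}}(2))_{4}$ from the explicit Betti formulae.
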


\subsection{Acknowledgements}
Jennifer Wilson was of utmost help in understanding \cite{miller2019higher} and writing this paper. I would like to thank John Wiltshire-Gordon for sharing his computations of small-degree Betti numbers of $F_{n}(T^{\circ})$ with Miller and Wilson; these computations were an indirect inspiration for this paper. I would also like to thank Jeremy Miller, Ben Knudsen, Zachary Himes, and Bradley Zykoski for their insightful comments on this paper.

\section{FB-mod, FI-mod, and FIM$^{+}$-mod}
\label{stab}

We introduce the categories FB-mod, FI-mod, and FIM$^{+}$-mod. We use these categories to formalize the notion of second order representation stability. One could rephrase the definitions and results for FI-mod and FIM$^{+}$-mod in the language of (skew)-twisted commutative algebras; for some examples of this see \cite{sam2012introduction} and \cite{nagpal2019noetherianity}.

\begin{definition}
Let \emph{FB} be the category whose objects are all finite (possibly empty) sets and whose morphisms are bijective maps.
\end{definition}

Every finite set is isomorphic to $[n] := \{1, \dots, n\}$ for some $n$; a choice of such isomorphisms for all finite sets provides an equivalence between FB and its full subcategory that has one set $[n]$ for each $n\in \Z_{0}^{+}$.

\begin{defn}
The category of \emph{FB-modules} over a ring $R$ has covariant functors from FB to the category of $R$-modules as objects, and natural transformations between these functors as morphisms.
\end{defn}

For an FB-module $W$ and a finite set $S$, let $W_{S}$ denote the corresponding $R$-module. When $S$ is the set $[n]$, we write $W_{n}$ for $W_{[n]}$. Each $W_{n}$ carries an action of $S_{n}$ arising from the equivalence $S_{n}\simeq \text{End}_{\text{FI}}([n])$; therefore, we can view $\{W_{n}\}$ as a sequence of symmetric group representations.

\begin{definition}
Let \emph{FI} be the category whose objects are all finite (possibly empty) sets and whose morphisms are injective maps.
\end{definition}

Just as for FB, there is an equivalence between FI and its full subcategory that has one set $[n]$ for each $n\in \Z_{\ge0}^{+}$.

\begin{definition}
The category of \emph{FI-modules} over the ring $R$ has covariant functors from FI to the category of $R$-modules as objects, and natural transformations between these functors as morphisms. Similarly, objects of the the category of \emph{FI-(homotopy)-space} are covariant functors from FI to the (homotopy)-category of topological spaces, and the morphisms are natural transformations between these functors.
\end{definition}

Much like an FB-module, an FI-module is a sequence of symmetric group representations; however, there are nontrivial maps between these representations. Let $V$ be an FI-module, if $\iota_{n,m}$, $n<m$, denotes the standard inclusion of $[n]$ into $[m]$, then $(\iota_{n, m})_{*}:V_{n}\to V_{m}$ must be $S_{n}$-equivariant; moreover, $(\iota_{n, m})_{*}(V_{n})$ must be invariant under the action of $S_{[m]-[n]}$.

\begin{definition}
An \emph{FI-submodule} $V'$ of an FI-module $V$, is a sequence of symmetric group representations $\{V'_{n}\}\subseteq \{V_{n}\}$ that is closed under the action of the FI-morphism.
\end{definition}

We want to build an FI-module out of an FB-module. Let  
\[
M(-)_{n} :=R[\text{Hom}_{\text{FI}}([-],[n]);
\]
letting $n$ vary over the nonnegative integers makes this an FI-module. If $W_{d}$ is an $S_{d}$-representation, we can define $M(W_{d})$ by setting
\[
M(W_{d})_{n}:=W_{d}\otimes_{R[S_{d}]}M(d)_{n}.
\]
Letting $n$ vary over the nonnegative integers one can check that $M(W_{d})$ is an FI-module.

Since an FB-module is a sequence of symmetric group representations, we can apply $M(-)$ to every degree of an FB-module $W$; this is a functor from FB-mod to FI-mod.

\begin{definition}
Given an FB-module $W$, let $M(W)$ denote the associated \emph{free FI-module}
\[
M(W):=\bigoplus_{d\ge 0}M(W_{d}).
\]
\end{definition}

\begin{definition}
An FI-module $V$ is \emph{generated} by a set $S\subseteq \coprod_{n\ge 0} V_{n}$ if $V$ is the smallest FI-submodule containing $S$. If there is some finite set $S$ that generates $V$, then $V$ is \emph{finitely generated}. If $V$ is generated by $\coprod_{0\le n\le d}V_{n}$, then $V$ is \emph{generated in degree $\le d$}. 
\end{definition}

We want to recover a generating set for an FI-module $V$, preferably a minimal one. Such a generating set consists of subrepresentations of $V_{n}$, for all $n$, not arising from the FI-structure in smaller degrees. We use the language of FI-homology to formalize this.

\begin{definition}
The \emph{zeroth FI-homology group} of an FI-module $V$ in degree $n$, denoted \emph{$H_{0}^{\emph{FI}}(V)_{n}$}, is the set of $S_{n}$-representations in $V_{n}$ not arising from the image of $V_{n-1}$ under the maps induced by all inclusions $f:[n-1]\hookrightarrow [n]$:
\[
H_{0}^{\text{FI}}(V)_{n}:= V_{n}\backslash\bigoplus_{f:[n-1]\hookrightarrow [n]}f_{*}V_{n-1}.
\]
\end{definition}

Note that $H_{0}^{\text{FI}}(V)_{n}$ is an $S_{n}$-representation, so $\{H_{0}^{\text{FI}}(V)\}_{n}$ is a sequence of symmetric group representations, i.e., an FB-module. Thus, we can think of $H_{0}^{\text{FI}}(-)$ as a functor from FI-mod to FB-mod.

\begin{definition}
A \emph{based set} $S_{*}$ is a set with a distinguished element $*\in S_{*}$, the \emph{basepoint}. A map of based sets $f:S_{*}\to T_{*}$ takes $*\in S_{*}$ to $*\in T_{*}$. We define \emph{FI\#} to be the category whose objects are finite  based sets and whose morphisms are maps of based sets that are injective away from the basepoint, i.e., if $f:S_{*}\to T_{*}$ is an FI\#-morphism, then $|f^{-1}(t)|\le 1$ for all $t\in T_{*}$, $t \neq *$.
\end{definition}

\begin{definition}
The category of \emph{FI\#-modules} over the commutative ring $R$ has covariant functors from FI\# to the category of $R$-modules as objects, and natural transformations as morphisms. We can similarly define the category of \emph{FI\#-homotopy-spaces}.
\end{definition}

An FI\#-module can be viewed as an FI-module by forgetting the morphisms in FI\# that are not injections. Similarly, an FI\#-module can also be seen to be an FI$^{op}$-module by considering only surjective morphisms in FI\#. Moreover, FI\# is equivalent to FI\#$^{op}$.

\begin{thm}\label{equiv}
(Church--Ellenberg--Farb \cite[Theorem 4.1.5]{church2015fi})  The category of FI\#-modules is equivalent to the category of FB-modules via the equivalence of categories
\[
M(-):\text{FB-Mod}\leftrightarrows \text{FI\#-Mod} :H_{0}^{FI}(-).
\]
Thus, every FI\#-module $V$ is of the form $\oplus^{\infty}_{n=0}M(H_{0}^{FI}(V)_{n})$.
\end{thm}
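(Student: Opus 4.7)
The plan is to exhibit $M(-)$ and $H_0^{\text{FI}}(-)$ as mutually inverse equivalences by exploiting the partial-injection description of FI\#. Every morphism $f:[n]_*\to[m]_*$ in FI\# factors uniquely as a surjection collapsing $[n]\setminus A$ to the basepoint, where $A:=f^{-1}([m])\setminus\{*\}$, followed by the honest injection $f|_A:A\hookrightarrow[m]$; equivalently, FI\# is the category of partial injections of finite sets.

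First I would verify that $M(W)$ carries a natural FI\#-structure refining its FI-structure. On an elementary generator $w\otimes g\in W_d\otimes_{R[S_d]}R[\text{Hom}_{\text{FI}}([d],[n])]$, declare $f_*(w\otimes g)=0$ if $g([d])\not\subseteq A$ and $f_*(w\otimes g)=w\otimes(f|_A\circ g)$ otherwise; this is well-defined on the $R[S_d]$-quotient and manifestly functorial. Since $H_0^{\text{FI}}$ of an FI\#-module naturally retains the $S_n$-action in each degree, it lands in FB-mod, giving a candidate adjoint pair. The easy half $H_0^{\text{FI}}(M(W))\cong W$ follows immediately from the definition of $M(W)_n$: every summand with $d<n$ lies in the image of some FI-morphism out of $M(W)_{n-1}$, while the $d=n$ summand equals $W_n$ and is a canonical complement.

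The substantive half is constructing a natural isomorphism $\eta_V:\bigoplus_{d\ge 0}M(H_0^{\text{FI}}(V)_d)\xrightarrow{\sim}V$. For each subset $A\subseteq[n]$, the FI\#-structure furnishes an inclusion $\iota_A:V_A\to V_n$ (from $A\hookrightarrow[n]$) and a retraction $\pi_A:V_n\to V_A$ (from the partial injection collapsing $[n]\setminus A$ to $*$), satisfying $\pi_A\iota_A=\mathrm{id}_{V_A}$, and more generally $\pi_A\iota_B$ factors through $V_{A\cap B}$ because partial injections compose by intersecting supports. Möbius inversion on the Boolean lattice of subsets of $[n]$ then produces elements $e_A:=\sum_{B\subseteq A}(-1)^{|A|-|B|}\iota_B\pi_B\in\mathrm{End}(V_n)$, and I would define $\eta_V$ by sending the copy of $H_0^{\text{FI}}(V)_{|A|}$ indexed by $A$ into $e_AV_n\subseteq V_n$ via a chosen lift to $V_A$ followed by $\iota_A$.

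The main obstacle is verifying that $\{e_A\}_{A\subseteq[n]}$ forms a complete orthogonal family and that each $e_{[d]}V_d$ is canonically isomorphic to $H_0^{\text{FI}}(V)_d$. Orthogonality $e_Ae_{A'}=\delta_{A,A'}e_A$ and completeness $\sum_A e_A=\mathrm{id}_{V_n}$ both reduce to standard inclusion-exclusion identities on the Boolean lattice once the composition rule for $\pi_A\iota_B$ is in hand. The identification $e_{[d]}V_d\cong H_0^{\text{FI}}(V)_d$ comes from observing that $e_{[d]}V_d$ is precisely the subspace annihilated by every $\pi_B$ with $B\subsetneq[d]$, and these elements form a canonical complement to $\bigoplus_{f:[d-1]\hookrightarrow[d]}f_*V_{d-1}$ because each such FI-image lies in the kernel of some $\pi_B$. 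Naturality of $\eta_V$ in FI\#-morphisms is automatic from its construction out of $\iota_A,\pi_A$. Reassembling the block decomposition of each $V_n$ across all $n$ yields the desired isomorphism $V\cong\bigoplus_{n\ge 0}M(H_0^{\text{FI}}(V)_n)$ and completes the proof of the equivalence.
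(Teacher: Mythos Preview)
The paper does not supply its own proof of this statement; it is quoted as a result of Church--Ellenberg--Farb \cite[Theorem 4.1.5]{church2015fi} and used thereafter as a black box, so there is no in-paper argument to compare against.

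On its own merits, your sketch is correct and is essentially the original CEF argument: give $M(W)$ its FI\#-structure by letting a partial injection kill any generator whose image falls outside the domain of definition, observe $H_0^{\text{FI}}(M(W))\cong W$ directly, and for the inverse direction decompose each $V_n$ via the M\"obius idempotents $e_A=\sum_{B\subseteq A}(-1)^{|A|-|B|}\iota_B\pi_B$ built from the FI\#-retractions. Your identification of $e_{[d]}V_d$ with $H_0^{\text{FI}}(V)_d$ is the one nontrivial point, and you handle it correctly by noting that for FI\#-modules the quotient defining $H_0^{\text{FI}}$ has a canonical complementary subspace (the common kernel of the proper retractions), so the quotient may be realized inside $V_d$.
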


Recall the definition of the ordered configuration space of $n$ points on a open manifold $X$ and the inclusion map $\iota:F_{n-1}(X)\hookrightarrow F_{n}(X)$ given in the introduction. The inclusion $\iota$ is well-behaved up to homotopy with respect to the symmetric group action, making $F_{*}(X)$ an FI-homotopy-space. Taking the homology groups of these ordered configuration spaces gives us a sequence of FI-modules: for fixed $k\ge 0$, $H_{k}(F_{*}(X))$ is an FI-module. We can say even more, namely that the forgetful map 
\[
\pi:F_{n}(X)\to F_{n-1}(X),
\]
given by forgetting the last coordinate
\[
\pi(x_{1}, \dots, x_{n})=(x_{1}, \dots, x_{n-1}),
\]
is well behaved with respect to the symmetric group action, and $F_{*}(X)$ is an FI\#-homotopy-space. Fixing $k$, this makes $H_{k}(F_{*}(X))$ an FI\#-module. By Theorem \ref{equiv}, the zeroth FI-homology of these homology groups, $H^{FI}_{0}\left(H_{k}(F_{*}(X))\right)$, is an FB-module. 

For a thorough overview of FI and FI-mod see \cite{church2015fi} or \cite{wilson2018introduction}.

\begin{definition}
A \emph{matching} of a set $A$ is a set of disjoint $2$-element subsets of $A$, and a matching is said to be \emph{perfect} if the union of these subsets is $A$. 
\end{definition}

The category FI ignores the data of the complement of the image of a morphism; by insisting on a perfect matching on the complement of the image we get the category FIM.

\begin{definition}
Let \emph{FIM} denote the category whose objects are finite sets and whose morphisms are injective maps $f:A\hookrightarrow B$ along with a perfect matching on $B\backslash f(A)$.
\end{definition}

Morphisms between two objects $A, B$ of FIM exist only when $|A|$ and $|B|$ have the same parity. If there are morphisms from $A$ to $B$ and we insist on an ordering of the perfect matching, then the symmetric group $S_{m}$, $m=\frac{|B|-|A|}{2}$, acts on an ordered perfect matching $B_{1}, \dots, B_{m}$ on the complement of the image of $A$ in $B$ by permuting the ordering:
\[
\sigma\cdot(B_{1}, \dots, B_{m})= (B_{\sigma(1)}, \dots, B_{\sigma(m)}).
\]

This inspires the definition of FIM$^{+}$, a category enriched over $R$-mod.

\begin{definition}
Let \emph{FIM$^{+}$} be the category whose objects are finite sets and whose module of morphisms $f$, consist of injective maps with an ordered perfect matching on the complement quotiented by a signed symmetric group action:
\[
\frac{R\big\langle (f:A\to B, B_{1}, \dots, B_{m})\big|f \text{ is injective, } |B_{i}|=2, B=\text{Im}(f)\sqcup B_{1} \cdots \sqcup B_{m}\big\rangle}{\langle (f, B_{1}, \dots, B_{m})=\text{sign}(\sigma)(f, B_{\sigma(1)}, \dots, A_{\sigma(m)}) \text{ for all } \sigma\in S_{m}\rangle}.
\]
\end{definition}

\begin{definition}
The category of \emph{FIM$^{+}$-modules} over the ring $R$ has covariant functors from FIM$^{+}$ to the category of $R$-modules as objects, and natural transformations between these functors as morphisms.
\end{definition}

\begin{definition}
An \emph{FIM$^{+}$-submodule} $W'$ of an FIM$^{+}$-module $W$ is a sequence of symmetric group representations $W'_{n}\subseteq W_{n}$ that is closed under the action of the FIM$^{+}$-morphism.
\end{definition}

\begin{definition}
An FIM$^{+}$-module $W$ is \emph{generated} by a set $S\subseteq \coprod_{n\ge 0} W_{n}$ if $W$ is the smallest FIM$^{+}$-submodule containing $S$. If there is some finite set $S$ that generates $W$, then $W$ is \emph{finitely generated}. If $W$ is generated by $\coprod_{0\le n\le d}V_{n}$, then $W$ is \emph{generated in degree $\le d$}. 
\end{definition}

Let
\[
M^{\text{FIM}^{+}}(-)_{n}:= R[\text{Hom}_{\text{FIM}^{+}}([-], [n])].
\]
By letting $n$ vary we see that this takes in an integer and produces an FIM$^{+}$-module. We can extend this to a functor from the category of $S_{d}$-representations by setting
\[
M^{\text{FIM}^{+}}(W_{d})_{n}:=W_{d}\otimes_{R[S_{d}]}M^{\text{FIM}^{+}}(d)_{n}.
\]

By taking a sequence of symmetric group representations $\{W_{d}\}$ we can define free FIM$^{+}$-modules.

\begin{definition}
Given an FB-module $W$, let $M^{\text{FIM}^{+}}(W)$ denote the associated \emph{free FIM$^{+}$-module}
\[
M^{\text{FIM}^{+}}(W):=\oplus_{d\ge 0}M^{\text{FIM}^{+}}(W_{d}).
\]
\end{definition}

Note that
\[
\dim\left(M^{\text{FIM}^{+}}(0)_{2n}\right)=\frac{(2n)!}{n!2^{n}},
\]
and
\[
\dim\left(M^{\text{FIM}^{+}}(W_{d})_{2n}\right)=\binom{2n}{d}\dim(W_{d})\dim\left(M^{\text{FIM}^{+}}(0)_{2n-d}\right).
\]

When $X$ is a noncompact manifold, we let $\mathcal{W}_{i}^{X}(n)$ denote the sequence of minimal generators of the homology groups of its ordered configuration space
\[
\mathcal{W}_{i}^{X}(n):=H^{\text{FI}}_{0}\left(H_{\frac{n+i}{2}}(F(X);\Q)\right)_{n}.
\]

By setting non-integer homology to be $0$, it follows that there is an $S_{n}$-action on $\mathcal{W}_{i}^{X}(n)$ for all $n\ge 0$. Thus, $\mathcal{W}_{i}^{X}$ is an FB-module. Miller and Wilson proved that it also has an FIM$^{+}$-module structure \cite{miller2019higher}. Additionally, they proved that as an FIM$^{+}$-module it is secondary representation stable in the sense that it satisfies the following theorem, a reformulation of Theorem \ref{r2structure}.

\begin{thm}\label{secondStab}
(Miller--Wilson \cite[Theorem 1.4]{miller2019higher}) If $K$ is a field of characteristic zero and $X$ is a connected noncompact $d$-manifold of finite type and $d\ge 2$, then, for each $i\ge 0$, the sequence of minimal generators
\[
\mathcal{W}_{i}^{X}(n)=H^{\emph{FI}}_{0}\left(H_{\frac{n+i}{2}}(F(X);K)\right)_{n}
\]
is finitely generated as an FIM$^{+}$-module.
\end{thm}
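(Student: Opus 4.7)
The plan is to reduce finite generation of the FIM$^+$-module $\mathcal{W}_i^X$ to two inputs already stated in the excerpt: the finite generation of $H_k(F_n(X);\Q)$ as an FI-module in $n$ (due to Church--Ellenberg--Farb, valid because $X$ is of finite type), and the secondary representation stability of Theorem \ref{jjThm}. The underlying idea is that once the secondary stable range is attained, the basic FIM$^+$-generator (adjoining an orbiting pair of points) already pushes classes from degree $n-2$ onto all of $\mathcal{W}_i^X(n)$; below that range only finitely many finite-dimensional pieces contribute, so the module must be finitely generated.

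First, I would fix $i\ge 0$ and observe that $\mathcal{W}_i^X(n)$ vanishes unless $n\equiv i\pmod 2$, matching the parity restriction on FIM$^+$-morphisms. Since $X$ has finite type, the FI-module $H_{(n+i)/2}(F_{\bullet}(X);\Q)$ is finitely generated by Church--Ellenberg--Farb, so its zeroth FI-homology $\mathcal{W}_i^X$ is finite-dimensional in every degree. Next, I would choose $N_i$ large enough that $r(n)\ge (i+1)/2$ for every $n\ge N_i$, where $r$ is the function of Theorem \ref{jjThm}. Applying that theorem with $k=(n+i)/2$ yields, for $n\ge N_i$,
\[
H_{(n+i)/2}(F_n(X);\Q)=\Q[S_n]\cdot\left(\iota_*(H_{(n+i)/2}(F_{n-1}(X);\Q))+\iota'_*(H_{(n+i)/2-1}(F_{n-2}(X);\Q))\right).
\]
Projecting to the quotient defining $H_0^{\mathrm{FI}}$ kills the first summand, so $\mathcal{W}_i^X(n)$ is contained in the $\Q[S_n]$-span of the image of $\iota'_*$ applied to $H_{(n+i)/2-1}(F_{n-2}(X);\Q)$.

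The crux of the argument is to show that this image depends only on the source class modulo the $\iota_*$-images, so that $\iota'_*$ descends to a map $\mathcal{W}_i^X(n-2)\to \mathcal{W}_i^X(n)$ which realizes the action of the basic FIM$^+$-generator. This boils down to the commutativity (up to $S_n$-permutation of labels) of ``add a point at infinity'' with ``add an orbiting pair near infinity'': the two inclusions can be arranged to have disjoint supports in the noncompact manifold $X$, so $\iota'_*\circ\iota_*$ equals $\iota_*\circ\iota'_*$ after relabeling. Once this descent is in place, the previous paragraph upgrades to: for $n\ge N_i$, $\mathcal{W}_i^X(n)$ lies inside the FIM$^+$-submodule generated by $\mathcal{W}_i^X(n-2)$.

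Iterating, $\bigoplus_{m<N_i}\mathcal{W}_i^X(m)$ generates all of $\mathcal{W}_i^X$ as an FIM$^+$-module; this is a finite direct sum of finite-dimensional vector spaces, yielding the desired finite generation. The main obstacle is the descent step: one must verify at the geometric or chain level that $\iota'_*$ commutes with $\iota_*$ up to $S_n$-action, so that Miller--Wilson's FIM$^+$-structure on $\mathcal{W}_i^X$ coincides with the map induced by $\iota'_*$ after quotienting. With that verification in hand, the theorem follows formally from secondary stability combined with Church--Ellenberg--Farb finite generation of the underlying FI-module.
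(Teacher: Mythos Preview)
The paper does not itself prove Theorem~\ref{secondStab}; it is quoted from Miller--Wilson, and the only indication of a proof is the sentence immediately following the statement: ``To prove this theorem, Miller and Wilson used the complex of injective words, the arc resolution spectral sequence, and a Noetherianity result of Nagpal, Sam, and Snowden.'' So the comparison is between your reduction and that sketch.

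Your deduction of Theorem~\ref{secondStab} from Theorem~\ref{jjThm} is formally correct. The descent step you flag as ``the crux'' is not a gap: it is precisely the assertion that the FIM$^{+}$-structure on $\mathcal{W}_i^X$ is induced by $\iota'_*$, which is how Miller--Wilson construct that structure (compare Proposition~\ref{d2isFIM} in the present paper, where the $d^2$-differential is identified with the basic FIM$^{+}$-morphism). Given that, your argument shows cleanly that Theorem~\ref{jjThm} implies Theorem~\ref{secondStab}.

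The problem is circularity. In Miller--Wilson the logical flow runs the other way: Theorem~1.4 (your Theorem~\ref{secondStab}) is proved first, and Theorem~1.2 (your Theorem~\ref{jjThm}) is extracted from it by letting $r(n)$ record how many of the generation degrees $N_i$ lie below $n$. The hard input that actually produces finite generation is the Noetherianity theorem of Nagpal--Sam--Snowden for FIM$^{+}$-modules (equivalently, modules over a skew-twisted commutative algebra) over a field of characteristic zero: Miller--Wilson exhibit $\mathcal{W}_i^X$ as a subquotient of a finitely generated FIM$^{+}$-module built from the arc resolution spectral sequence and the complex of injective words, and Noetherianity then forces $\mathcal{W}_i^X$ itself to be finitely generated. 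Your argument invokes none of this machinery; it simply repackages Theorem~\ref{jjThm}, whose proof already rests on Theorem~\ref{secondStab}. So as an independent proof it does not stand, though as a remark that the two theorems are equivalent reformulations it is perfectly valid.
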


To prove this theorem, Miller and Wilson used the complex of injective words, the arc resolution spectral sequence, and a Noetherianity result of Nagpal, Sam, and Snowden \cite{nagpal2019noetherianity}. In addition to proving this theorem, they computed some explicit examples of secondary representation stability.

\begin{prop}
(Miller--Wilson \cite[Proposition 3.33]{miller2019higher}) 
\[
\mathcal{W}_{0}^{\R^{2}}(2n)\cong M^{\emph{FIM}^{+}}(0)_{2n} \cong\bigoplus_{\lambda\in D_{2n}}V_{\lambda}
\]
where $V_{\lambda}$ is the irreducible $S_{2n}$ representation corresponding to the partition $\lambda$, and $\lambda$ is in $D_{2n}$ if and only if when the associated Young diagram is cut in two along the upper staircase, the resultant two skew subdiagrams are symmetric under reflection in the line of slope $-1$.
\end{prop}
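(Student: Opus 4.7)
The plan is to derive the three-term isomorphism from Proposition~\ref{r2structure} by identifying both $\mathcal{W}_{0}^{\R^{2}}(2n)$ and $M^{\text{FIM}^{+}}(0)_{2n}$ with the quotient appearing there.

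First, I would identify $\mathcal{W}_{0}^{\R^{2}}(2n)$ with that quotient. Unpacking the definition,
\[
\mathcal{W}_{0}^{\R^{2}}(2n) \;=\; H_{0}^{\text{FI}}\bigl(H_{n}(F(\R^{2});\Q)\bigr)_{2n} \;=\; \frac{H_{n}(F_{2n}(\R^{2});\Q)}{\Q[S_{2n}]\cdot \iota_{*} H_{n}(F_{2n-1}(\R^{2});\Q)}.
\]
For $X=\R^{2}$, the classical Arnold presentation shows that $H_{*}(F(\R^{2}))$ is generated as an FI\#-algebra by the orbit-pair class in $H_{1}(F_{2}(\R^{2});\Q)$; in particular $\Q[S_{2n}]\cdot \iota'_{*} H_{n-1}(F_{2n-2}(\R^{2});\Q) = H_{n}(F_{2n}(\R^{2});\Q)$, so the numerator in Proposition~\ref{r2structure} is the full top homology. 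Combining this with Proposition~\ref{r2structure} yields
\[
\mathcal{W}_{0}^{\R^{2}}(2n) \;\cong\; \bigoplus_{\lambda \in D_{2n}} V_{\lambda}.
\]

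Next, I would identify $M^{\text{FIM}^{+}}(0)_{2n}$ with $\mathcal{W}_{0}^{\R^{2}}(2n)$ by showing that $\mathcal{W}_{0}^{\R^{2}}$ is free as an FIM$^{+}$-module on a single generator in degree $0$. The candidate generator is the unit class $1 \in \mathcal{W}_{0}^{\R^{2}}(0) = H_{0}(F_{0}(\R^{2});\Q) = \Q$. The Miller--Wilson FIM$^{+}$-action pairs this class with the orbit-pair generator of $H_{1}(F_{2}(\R^{2}))$ along each ordered matching, defining an FIM$^{+}$-module map $\varphi\colon M^{\text{FIM}^{+}}(0) \to \mathcal{W}_{0}^{\R^{2}}$. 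Surjectivity of $\varphi$ reduces again to the Arnold generation statement, and injectivity follows from a dimension count: $\dim M^{\text{FIM}^{+}}(0)_{2n} = (2n)!/(n!\,2^{n})$ equals $\sum_{\lambda\in D_{2n}}\dim V_{\lambda}$, matching the dimension of $\mathcal{W}_{0}^{\R^{2}}(2n)$ computed in the first step.

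The main obstacle is verifying the full generation claim $\Q[S_{2n}]\cdot \iota'_{*} H_{n-1}(F_{2n-2}(\R^{2});\Q) = H_{n}(F_{2n}(\R^{2});\Q)$ for every $n$, rather than only in the secondary-stable range guaranteed by Theorem~\ref{jjThm}. This uses the explicit FI\#-algebra structure of $H_{*}(F(\R^{2}))$ together with the identification of Arnold's degree-$1$ generator with the orbit-pair class defining $\iota'_{*}$; once this is in place, the remaining arguments are bookkeeping with FIM$^{+}$-module maps and dimension counts.
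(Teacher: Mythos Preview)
The paper does not prove this proposition; it is quoted from Miller--Wilson \cite{miller2019higher} without proof, exactly as Proposition~\ref{r2structure} is. Both are labeled as the same external result (their Proposition~3.33), presented once in the quotient language of the introduction and once in the $\mathcal{W}_0$/$M^{\text{FIM}^+}$ language of Section~\ref{stab}. So there is no in-paper proof to compare your attempt against.

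That said, your argument is not empty: it supplies the bridge between the two formulations. The step identifying the numerator in Proposition~\ref{r2structure} with all of $H_n(F_{2n}(\R^2);\Q)$ via Arnold's presentation is correct, and the dimension formula $\dim M^{\text{FIM}^+}(0)_{2n}=(2n)!/(n!\,2^n)$ is recorded in the paper. What your sketch does \emph{not} do --- and cannot do from the material in this paper alone --- is establish the irreducible decomposition $\bigoplus_{\lambda\in D_{2n}} V_\lambda$ itself. That is the substantive combinatorial content of Miller--Wilson's result (coming from the structure of $M^{\text{FIM}^+}(0)_{2n}$ as a signed-matching module), and you are invoking it through Proposition~\ref{r2structure} rather than proving it. In that sense your proposal is a reformulation argument, not an independent proof, and since the paper treats both statements as a single cited fact, there is nothing further to compare.
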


\begin{prop}\label{someFIMmods}
(Miller--Wilson \cite[Proposition 3.35]{miller2019higher}) Let $X$ be a connected, noncompact surface. If $X$ is not orientable or of genus greater than zero, then
\[
\mathcal{W}_{0}^{X}(0) \cong \Z\indent\text{ and }\indent\mathcal{W}_{0}^{X}(2n)\cong 0\text{ for }n>0.
\]
\end{prop}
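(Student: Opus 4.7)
The plan is to exploit the FI\#-module structure on the sequence $\{H_n(F_m(X);\Q)\}_m$---afforded by the inclusions $\iota$ together with the forgetful maps $\pi\colon F_m(X) \to F_{m-1}(X)$---and to combine it with polynomial growth estimates for the Betti numbers of $F_m(X)$.

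For the base case, note that $\mathcal{W}_0^X(0) = H_0^{\text{FI}}(H_0(F(X);\Q))_0 = H_0(F_0(X);\Q)$, and since $F_0(X)$ is a single point, this is the trivial one-dimensional module, giving the first isomorphism.

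For the vanishing $\mathcal{W}_0^X(2n) \cong 0$ when $n > 0$, Theorem~\ref{equiv} gives the decomposition
\[
H_n(F_*(X);\Q) \cong \bigoplus_d M\!\left(H_0^{\text{FI}}(H_n(F(X);\Q))_d\right),
\]
and consequently
\[
\dim H_n(F_m(X);\Q) = \sum_d \binom{m}{d}\, \dim H_0^{\text{FI}}(H_n(F(X);\Q))_d,
\]
a polynomial in $m$ whose degree equals the largest FB-degree carrying a nontrivial generator. It therefore suffices to show that $\dim H_n(F_m(X);\Q)$ grows as a polynomial of degree strictly less than $2n$ in $m$. For $X = T^{\circ}$ this is precisely Proposition~\ref{theTheorem}: degree $2n-2$ for $n \geq 3$, and degrees $0, 1, 3$ for $n = 0, 1, 2$, each of which is strictly less than $2n$ when $n > 0$. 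For any other connected noncompact surface in the hypothesis class, the analogous polynomial bound follows from the iterated Fadell--Neuwirth fibration $F_m(X) \to F_{m-1}(X)$ together with a Leray--Serre degree count, using that $H_1$ of the fiber $X \setminus \{k\text{ points}\}$ grows linearly in $k$ and that $\chi(X) \leq 0$ under the hypothesis.

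The main obstacle is establishing the strict inequality ``degree $< 2n$'' uniformly across the hypothesis class; this is precisely what distinguishes these surfaces from $\R^2$, for which the degree is exactly $2n$ and $\mathcal{W}_0^{\R^2}(2n) \neq 0$ (consistent with the first half of Proposition~\ref{r2structure}). Making the inequality precise will likely require either a careful Leray--Serre argument adapted to each surface in the hypothesis class or an appeal to Knudsen's formula expressing the rational homology of configuration spaces in terms of the cohomology of a Lie algebra attached to $X$. Once the polynomial degree bound is in hand, the FI\# dimension count immediately forces $H_0^{\text{FI}}(H_n(F(X);\Q))_{2n} = 0$, completing the proof.
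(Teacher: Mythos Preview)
This proposition is not proved in the paper; it is cited from Miller--Wilson \cite[Proposition~3.35]{miller2019higher} without argument, so there is no in-paper proof to compare against.

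Your FI\#-plus-dimension-count strategy is correct in outline, and for $X=T^{\circ}$ it is complete: the strict bound $\deg_m b_n(F_m(T^{\circ}))<2n$ is exactly Proposition~\ref{theTheorem}. For the remaining surfaces in the hypothesis class, though, the sketch has a genuine gap that you yourself flag. The iterated Fadell--Neuwirth / Leray--Serre recursion gives only $\deg_m b_n(F_m(X))\le 2n$ (each step raises the degree by at most $2$, starting from $0$), and nothing in your outline forces the inequality to be strict. Your appeal to $\chi(X)\le 0$ cannot supply the missing step: the annulus and the pair of pants also satisfy $\chi\le 0$ but are orientable of genus zero, hence excluded from the hypothesis, and indeed $\mathcal{W}_0^X(2)\neq 0$ for them. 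So the Euler characteristic is not the operative invariant.

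What actually singles out the hypothesis class is a low-degree geometric fact: on a surface containing a handle or a crosscap, a small loop of point~$2$ around point~$1$ is, in $H_1(F_2(X);\Q)$, a $\Q$-linear combination of loops in which only one point moves (it is a commutator of handle curves in the positive-genus case, and twice a crosscap core in the non-orientable case). Hence the orbit class already vanishes in $H_0^{\text{FI}}(H_1(F(X)))_2$, and propagating this through the module structure kills $\mathcal{W}_0^X(2n)$ for all $n>0$. That direct route bypasses Betti-number asymptotics entirely and is the argument given in \cite{miller2019higher}; your polynomial-degree approach would recover it only after establishing, for every such $X$, an analogue of Proposition~\ref{theTheorem} that you have not supplied.
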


These explicit examples of secondary representation stability are edge cases. When $X=\R^{2}$ the FIM$^{+}$-module $\mathcal{W}_{i}^{\R^{2}}(2n)$ is a free FIM$^{+}$-module for all $i\ge 0$ and all the homology groups of $F_{*}(\R^{2})$ are known. For a general connected surface $X$, it was known that $\mathcal{W}_{0}^{X}(0) = H^{\text{FI}}_{0}(H_{0}(F(X)))_{0}=H_{0}(F_{0}(X))\cong \Z$. For $n>0$, $\mathcal{W}_{0}^{X}(2n)=0,$ i.e., stably zero. This is the opposite extreme of $\mathcal{W}_{0}^{\R^{2}}(2n)$ being free. 

All examples of secondary representation stability where the FIM$^{+}$-module structure is known, including the ones above, are either free FIM$^{+}$-modules or stably zero. We seek an example of secondary representation stability that lies between these extremes; namely, a finitely generated FIM$^{+}$-module arising from the homology groups of the ordered configuration space of a surface that is neither free nor stably zero.

\section{Arc Resolution Spectral Sequence}
\label{arcres}

In this section we describe the arc resolution spectral sequence that Miller and Wilson used to prove their secondary representation stability results. We state a few important results on the differentials of this spectral sequence, which we will later use to bound the generation degree of two FIM$^{+}$-modules of the form $\mathcal{W}_{i}^{T^{\circ}}(n)$.

The arc resolution spectral sequence arises from the augmented semi-simplical space $Arc_{*}(F_{*}(M))$, see \cite[Definition 3.6]{miller2019higher} for a definition and \cite[Section 3.2]{miller2019higher} for more details about this spectral sequence. For more about spectral sequences arising from augmented semi-simplical spaces see, for example, Randal-Williams \cite[Section 2.3]{randal2013homological}.

\begin{prop}
\label{arcresprop}
(Miller--Wilson \cite[Proposition 3.10 and Proposition 3.8]{miller2019higher}) Let $M$ be a noncompact connected smooth $d$-manifold $d\ge 2$. There is a spectral sequence called the arc resolution spectral sequence that satisfies:
\[
E^{2}_{p, q}[M](n)\cong\emph{Ind}^{S_{n}}_{S_{p+1}\times S_{n-p-1}} \mathcal{T}_{p+1}\boxtimes H_{0}^{\emph{FI}}(H_{q}(F(M)))_{n-p-1},
\] 
where $\mathcal{T}_{p}$ is a free abelian group of rank $\sum^{p}_{i=0}(-1)^{i}\frac{p!}{i!}$. Moreover, 
\[
E^{\infty}_{p,q}[M](n)=0 \emph{ for }p+q+2 \le n.
\]
See Figure \ref{arcresfig}.
\end{prop}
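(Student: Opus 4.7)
The plan is to construct the arc resolution spectral sequence directly from an augmented semi-simplicial space, identify its $E^{2}$-page with the claimed induced representations, and then establish enough connectivity of the arc complex to force the vanishing of $E^{\infty}$ in the indicated range.

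First I would define $Arc_{\bullet}(F_{n}(M))$ as an augmented semi-simplicial space over $F_{n}(M)$ whose $p$-simplices parameterize a configuration together with $p+1$ disjoint properly embedded arcs, each joining a point of the configuration to the topological end of $M$, with face maps that delete an arc. Filtering the associated augmented double complex by skeleta produces a spectral sequence whose $E^{1}$ page is $H_{q}(Arc_{p}(F_{n}(M)))$ and which converges to zero in degrees where the geometric realization is sufficiently highly connected (this is the standard machine for augmented semi-simplicial spaces, see for example Randal-Williams).

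Next I would compute the $E^{2}$-page. A $p$-simplex selects, up to isotopy, an ordered $(p+1)$-tuple of marked points together with arc decorations; the remaining $n-p-1$ points then give a configuration in $F_{n-p-1}(M)$, and summing over which ordered subset is used produces an induction from $S_{p+1}\times S_{n-p-1}$ to $S_{n}$. Running the spectral sequence first in the $p$-direction for fixed $q$, the differential is essentially that of the complex of injective words on $p+1$ letters, tensored with the stabilization maps on $H_{q}(F_{\bullet}(M))$ arising from an arc sliding a point out to infinity. The complex of injective words is known to have homology concentrated in top degree with value the derangement representation, whose rank is precisely $\sum_{i=0}^{p+1}(-1)^{i}(p+1)!/i!$; this is the source of the factor $\mathcal{T}_{p+1}$. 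The surviving quotient in the configuration factor is exactly
\[
H_{0}^{\mathrm{FI}}\bigl(H_{q}(F(M))\bigr)_{n-p-1},
\]
namely the cokernel of the stabilization maps applied to $H_{q}(F_{n-p-1}(M))$. Combining these two calculations yields the claimed $E^{2}$ identification.

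Finally, for the vanishing of $E^{\infty}_{p,q}$ in the range $p+q+2\le n$, it suffices to show that the geometric realization $|Arc_{\bullet}(F_{n}(M))|$ is at least $(n-2)$-connected, after which the spectral sequence abuts to zero in that range. I would attempt this by a Quillen-style bad simplex argument on a poset of arc systems: pushing arcs into a collar neighborhood of the end, extending spheres of low dimension over disks by a local surgery, and inducting on a complexity measure counting intersections with a reference arc. The hard part will be this connectivity estimate: identifying the $E^{2}$ page is largely bookkeeping once the derangement interpretation of the complex of injective words is in hand, but extracting a sharp connectivity bound for the arc complex on a general noncompact $d$-manifold — uniformly in $M$ — is the substantive technical content of Miller--Wilson's Proposition 3.8 and the place where the real work lies.
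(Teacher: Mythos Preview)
The paper does not prove this proposition; it is stated as a citation of Miller--Wilson \cite[Propositions 3.8 and 3.10]{miller2019higher}, with no argument given beyond pointing to the arc resolution construction in \cite[Section 3.2]{miller2019higher}. So there is no ``paper's own proof'' to compare against.

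Your sketch is a faithful outline of the Miller--Wilson argument itself: build the augmented semi-simplicial arc space, run the associated spectral sequence, identify the $E^{1}$-differential in the $p$-direction with the complex of injective words acting on the FI-module $H_{q}(F_{\bullet}(M))$, and read off the derangement representation $\mathcal{T}_{p+1}$ together with the FI-generators $H_{0}^{\mathrm{FI}}(H_{q}(F(M)))_{n-p-1}$. One point you underplay: the clean identification of the $E^{2}$-page depends on $H_{q}(F_{\bullet}(M))$ being not just an FI-module but an FI\#-module (equivalently, free as an FI-module over $\Q$), so that the twisted injective-words complex has homology concentrated in a single degree and equal to $\mathcal{T}_{p+1}\boxtimes H_{0}^{\mathrm{FI}}(-)$. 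Without the FI\# structure you would only get a filtration, not an isomorphism. Your assessment of where the real work lies---the $(n-2)$-connectivity of the arc resolution---is correct, and that is indeed the content of \cite[Proposition 3.8]{miller2019higher}.
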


\begin{figure}[H]
\centering
\begin{tikzpicture} \footnotesize
  \matrix (m) [matrix of math nodes, nodes in empty cells, nodes={minimum width=3ex, minimum height=5ex, outer sep=2ex}, column sep=3ex, row sep=3ex]{
  4    & H_0^{\text{FI}}\Big(H_4(F(M))\Big)_{n}  & 0 &  \text{Ind}_{S_{2}\times S_{n-2}}^{S_n} \mathcal{T}_{2} \boxtimes H_0^{\text{FI}} (H_4( F(M)))_{n-2}   &\text{Ind}_{S_{3}\times S_{n-3}}^{S_n} \mathcal{T}_{3} \boxtimes H_0^{\text{FI}} (H_4( F(M)))_{n-3} & \\  
 3    &  H_0^{\text{FI}}\Big(H_3(F(M))\Big)_{n}  &0 &  \text{Ind}_{S_{2}\times S_{n-2}}^{S_n} \mathcal{T}_{2} \boxtimes H_0^{\text{FI}} (H_3 (F(M)))_{n-2}   &\text{Ind}_{S_{3}\times S_{n-3}}^{S_n} \mathcal{T}_{3} \boxtimes H_0^{\text{FI}} (H_3 (F(M)))_{n-3} & \\  
 2    &  H_0^{\text{FI}}\Big(H_2(F(M))\Big)_{n} & 0&  \text{Ind}_{S_{2}\times S_{n-2}}^{S_n} \mathcal{T}_{2} \boxtimes H_0^{\text{FI}} (H_2( F(M)))_{n-2}    &\text{Ind}_{S_{3}\times S_{n-3}}^{S_n} \mathcal{T}_{3} \boxtimes H_0^{\text{FI}} (H_2( F(M)))_{n-3} &  \\          
1     &  H_0^{\text{FI}}\Big(H_1(F(M))\Big)_{n}   &0    & \text{Ind}_{S_{2}\times S_{n-2}}^{S_n} \mathcal{T}_{2} \boxtimes H_0^{\text{FI}} (H_1( F(M)))_{n-2}  &\text{Ind}_{S_{3}\times S_{n-3}}^{S_n} \mathcal{T}_{3} \boxtimes H_0^{\text{FI}} (H_1( F(M)))_{n-3} & \\             
 0     &  H_0^{\text{FI}}\Big(H_0(F(M))\Big)_{n}  & 0 & \text{Ind}_{S_{2}\times S_{n-2}}^{S_n} \mathcal{T}_{2} \boxtimes H_0^{\text{FI}} (H_0( F(M)))_{n-2}  &\text{Ind}_{S_{3}\times S_{n-3}}^{S_n} \mathcal{T}_{3} \boxtimes H_0^{\text{FI}} (H_0( F(M)))_{n-3}  & \\       
 \quad\strut &   -1  &  0  &  1  & 2  &\\}; 

\draw[thick] (m-1-1.east) -- (m-6-1.east) ;
\draw[thick] (m-6-1.north) -- (m-6-6.north east) ;
\end{tikzpicture}
\caption{The $E^{2}$-page of the arc resolution spectral sequence}
\label{arcresfig}
\end{figure}

The differentials of the arc resolution spectral sequence satisfy a Leibniz rule.

\begin{lem}
\label{Leibniz}
(Miller--Wilson \cite[Lemma 3.15]{miller2019higher}) Let $M$ be a noncompact connected smooth $d$-manifold $d\ge 2$. There exist maps
\[
t^{r}:E^{r}_{p,q}[\R^{d}](S)\otimes E^{r}_{p',q'}[M](T)\to E^{r}_{p+p'+1, q+q'}[M](S\sqcup T)
\]
arising from the embedding of $\text{Arc}_{p}(F_{S}(\R^{d}))\times \text{Arc}_{p'}(F_{T}(M))\to \text{Arc}_{p+p'}(F_{S\sqcup T}(M))$ given in \cite[Definition 3.14]{miller2019higher}. The maps $t^{r}$ satisfy a Leibniz rule with respect to the differentials: if $a\in E^{r}_{p,q}[\R^{d}](S)$ and $b\in E^{r}_{p',q'}[M](T)$, then
\[
d^{r}(t^{r}(a\otimes b))=t^{r}(d^{r}(a)\otimes b)+(-1)^{p+q}t^{r}(a\otimes d^{r}(b)).
\]
\end{lem}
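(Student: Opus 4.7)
The plan is to deduce the Leibniz identity from a single chain-level pairing and then propagate it through all pages of the spectral sequence using the standard formalism for spectral sequences of filtered complexes. Recall that the arc resolution spectral sequence arises from the filtration, by semi-simplicial degree, of a chain complex computing the homology of $\|\text{Arc}_\bullet(F(M))\|$. The geometric embedding
\[
\text{Arc}_{p}(F_{S}(\R^{d}))\times \text{Arc}_{p'}(F_{T}(M))\hookrightarrow \text{Arc}_{p+p'}(F_{S\sqcup T}(M))
\]
from \cite[Definition 3.14]{miller2019higher} is given as input; what must be produced is a pairing on chains compatible with the filtration.

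First I would pass to singular chains and compose with an Eilenberg--Zilber shuffle map to obtain a chain-level pairing
\[
t:C_{*}(\text{Arc}_{\bullet}(F_{S}(\R^{d})))\otimes C_{*}(\text{Arc}_{\bullet}(F_{T}(M)))\to C_{*}(\text{Arc}_{\bullet}(F_{S\sqcup T}(M))),
\]
respecting the filtration by semi-simplicial degree with the shift that accounts for the $+1$ in the bidegree of the target. The total boundary on each factor decomposes as an alternating sum of face maps plus the internal singular differential, and because the Eilenberg--Zilber shuffle is a chain map between tensor products of semi-simplicial chain complexes, one obtains at the chain level a Leibniz identity
\[
\partial\circ t = t\circ(\partial\otimes \mathrm{id}) + (-1)^{p+q}\,t\circ(\mathrm{id}\otimes \partial),
\]
with the usual Koszul sign, where $p+q$ is the total degree of the first factor.

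Second, I would invoke the standard fact that a filtered chain pairing satisfying a Leibniz rule induces pairings $t^{r}$ on every page of the associated spectral sequence, and that these satisfy the analogous Leibniz identity with respect to the page differentials $d^{r}$. The induction on $r$ is routine: if $a$ and $b$ represent classes on the $E^{r}$-page, one shows that $t(a\otimes b)$ represents $t^{r}(a\otimes b)$, and then computes $d^{r}\,t^{r}(a\otimes b)$ via the chain-level pairing to recover the claimed formula. Equivariance with respect to the $S_{S}\times S_{T}$-actions, which is needed to make $t^{r}$ compatible with the symmetric group structure on the $E^{2}$-page identification of Proposition \ref{arcresprop}, is a direct check: the embedding of arc spaces is equivariant under relabeling the configuration points on each factor.

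The main obstacle is establishing bi-semi-simpliciality of the geometric embedding, that is, showing that the face operators on $\text{Arc}_{\bullet}(F_{S\sqcup T}(M))$ restricted to the image decompose as the sum of face operators acting only on the $\R^{d}$-factor and face operators acting only on the $M$-factor. This decomposition is precisely what produces the two summands in the Leibniz rule, and it requires a careful geometric inspection of how the arcs in a $F_{S}(\R^{d})$-configuration sit alongside the arcs in a $F_{T}(M)$-configuration after the embedding. Once bi-semi-simpliciality is verified, the remainder of the proof is Koszul-sign bookkeeping and a purely formal spectral sequence argument.
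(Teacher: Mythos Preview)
The paper does not prove this lemma at all: it is stated with attribution to Miller--Wilson \cite[Lemma~3.15]{miller2019higher} and used as a black box, with no proof or sketch given. There is therefore nothing in the paper to compare your proposal against.

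Your outline is the standard route one would expect for such a result---a chain-level pairing coming from a bi-semi-simplicial map, combined with the general machinery that a filtered chain pairing satisfying a Leibniz rule induces Leibniz-compatible pairings on every page---and is presumably close in spirit to what Miller--Wilson do in their paper. If you want to verify or flesh out the argument, you should consult \cite{miller2019higher} directly rather than this paper.
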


When $n = 2$ or $3$, the group $\mathcal{T}_{n}$ is isomorphic a free group of rank $n-1$, with $\Z$-basis $[1,2]$ and $[[1,2],3], [[1,3],2]$, respectively, by \cite[Theorem 2.33]{miller2019higher} and \cite[Theorem 2.35]{miller2019higher}. These are Reutenauer's basis elements for $\mathcal{L}_{n}$, a subgroup of $\mathcal{T}_{n}$. For more information on $\mathcal{T}_{n}$ and $\mathcal{L}_{n}$ see \cite[Section 2.3]{miller2019higher}. There are explicit formulae for differentials in the arc resolution spectral sequence when applied to Reutenauer's basis elements.

\begin{lem}
\label{bullseye}
(Miller--Wilson \cite[Lemma 3.17]{miller2019higher}) Let $E^{r}_{p,q}[M](n)$ be the arc resolution spectral sequence, and let
\[
L=[[[\cdots[m_{1}, m_{2}], m_{3}],\dots],m_{n}]\in E^{1}_{k-1,0}[M](n).
\]
Then $d^{r}(L)=0$ for $r<k$ and $d^{n}(L)$ is the homology class of $\psi(\cdots \psi(\psi(m_{1}, m_{2}), m_{3}), \dots, m_{n})$ in $E^{n}_{-1, n-1}[M](n)$, where the $m_{i}$ are distinct points in $M$ and $\psi(x,y)$ denotes $y$ orbiting $x$. See Figure \ref{tripbull} for a representative of $\psi(\psi(m_{1}, m_{2}), m_{3}))$ in $F_{3}(T^{\circ})$.
\end{lem}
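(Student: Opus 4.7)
My plan is induction on $n$, using Lemma \ref{Leibniz} (the Leibniz rule for the products $t^r$) as the main engine, with the base case $n=2$ handled by a direct computation in the arc resolution.

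For the base case, $L = [m_1, m_2]$ is a generator of $\mathcal{T}_2 \cong \Z$ sitting in $E^1_{1,0}[M](2)$. By Proposition \ref{arcresprop}, this class survives to the $E^2$-page, so $d^1(L) = 0$. The only potentially nonzero differential on $L$ is then $d^2 \colon E^2_{1,0} \to E^2_{-1,1} = H_1(F_2(M))$. I would compute $d^2(L)$ by unpacking the arc-resolution semi-simplicial space: $L$ corresponds to a cellular chain built from two arcs joining $m_1$ and $m_2$, and its simplicial boundary sweeps out a loop in $F_2(M)$ along which $m_2$ orbits $m_1$ once, which is precisely the class $\psi(m_1, m_2)$.

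For the inductive step, write $L = [L', m_n]$ with $L' = [[\cdots[m_1, m_2]\cdots], m_{n-1}] \in E^1_{n-2, 0}[M](n-1)$. The Lie bracket $[-, m_n]$ on $\bigoplus_k \mathcal{T}_k$ should be realized geometrically as an instance of the product $t^r$ of Lemma \ref{Leibniz}, pairing $L'$ against a class $\epsilon$ supported on $\{m_n\}$ whose bidegree is chosen so that $t^r(L' \otimes \epsilon)$ lands in $E^r_{n-1, 0}[M](n)$ and represents $L$. Applying Leibniz then gives
\[
d^r(L) \;=\; t^r(d^r(L') \otimes \epsilon) \;\pm\; t^r(L' \otimes d^r(\epsilon)).
\]
The second term vanishes for degree reasons, since $\epsilon$ has no room for nontrivial differentials on any page. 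By induction $d^r(L') = 0$ for $r < n-1$, so $d^r(L) = 0$ in the same range. At $r = n-1$ the inductive hypothesis identifies $d^{n-1}(L')$ with the iterated orbiting class $\psi(\cdots\psi(m_1, m_2), \ldots, m_{n-1})$, and combining this with the outer bracket through one further application of the product structure at the $E^n$-page yields $d^n(L) = \psi(\cdots\psi(m_1, m_2), \ldots, m_n) \in E^n_{-1, n-1}[M](n)$, as claimed.

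The principal obstacle is the precise geometric identification of the Lie bracket $[-, m_n]$ with a concrete product $t^r$ at the level of the arc resolution, together with bookkeeping of the shift $+1$ in the first coordinate and the sign $(-1)^{p+q}$ at each Leibniz application, so that the output lands on the correct page and in the correct bidegree. This identification ultimately reflects the little disks operad action on ordered configurations; once it is established, the induction is essentially formal, since only the outermost orbit is ``consumed'' at each spectral sequence page, and all differentials below $r = n$ are forced to vanish.
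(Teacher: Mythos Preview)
The paper does not give its own proof of this lemma; it is quoted from Miller--Wilson \cite[Lemma 3.17]{miller2019higher} without argument, so there is nothing in the present paper to compare your proposal against.

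Evaluating your proposal on its own: the inductive strategy via Leibniz is natural, but the inductive step has a real gap. You want a class $\epsilon$ supported on the single label $\{m_n\}$ so that $[L', m_n] = t^r(\epsilon \otimes L')$ (note that in Lemma~\ref{Leibniz} the $\R^d$-factor comes first, so your tensor is in the wrong order), with $\epsilon$ necessarily in bidegree $(0,0)$. But $E^2_{0,0}[\R^d](1) \cong \mathcal{T}_1$, and the rank formula in Proposition~\ref{arcresprop} gives $\mathrm{rank}\,\mathcal{T}_1 = 1!/0! - 1!/1! = 0$; likewise $E^2_{0,0}[M](1)=0$. So no such $\epsilon$ exists on any page $r \ge 2$. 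If you retreat to $E^1$, a candidate $\epsilon$ is available, but then $d^1(\epsilon)$ is the generator of $E^1_{-1,0}[\R^d](1) = H_0(F_1(\R^d))$, and Leibniz forces a nonzero contribution $t^1(d^1(\epsilon) \otimes L')$ to $d^1(L)$; since $L' \in \mathcal{T}_{n-1}$ is already a $d^1$-cycle there is nothing to cancel it, contradicting the fact that $L$ survives to $E^2$. The conclusion is that $[L', m_n]$ is \emph{not} a single $t$-product with a one-point class; the Lie bracket is an antisymmetrized combination, and any Leibniz-based induction must track this explicitly. You rightly flag this identification as the ``principal obstacle,'' but as stated it is a genuine gap rather than routine bookkeeping.
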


We can use the Leibniz rule and our knowledge of the differentials from $\mathcal{L}_{n}$ to calculate many of the differentials of the arc resolution spectral sequence and relate them to FIM$^{+}$-morphisms.

\begin{prop}
\label{d2isFIM}
Let $M$ be a noncompact $2$-manifold. The $E^{2}[M]$-page differential from the $1^{\text{st}}$ column to the $-1^{\text{st}}$ column of the arc resolution spectral sequence,
\[
d^{2}:E^{2}_{1,q}[M]\to E^{2}_{-1,q+1}[M],
\]
is the FIM$^{+}$-morphism arising from the standard inclusion $[n-2]\hookrightarrow[n]$.
\end{prop}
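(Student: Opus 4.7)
The plan is to reduce $d^2$ to the action on a Reutenauer bracket via the Leibniz rule of Lemma \ref{Leibniz}, evaluate it using Lemma \ref{bullseye}, and then recognize the outcome as the FIM$^+$-morphism from the construction discussed at the end of Section \ref{stab}.

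First, I would exploit the induced structure of the source. Every element of
\[
E^2_{1,q}[M](n) = \text{Ind}^{S_n}_{S_2 \times S_{n-2}} \mathcal{T}_2 \boxtimes H_0^{\text{FI}}(H_q(F(M)))_{n-2}
\]
is an $S_n$-combination of simple tensors $t^2(L \otimes x)$, where $L = [m_{n-1}, m_n]$ is the Reutenauer generator of $\mathcal{T}_2 = E^2_{1,0}[\R^2](\{n-1, n\})$ and $x \in E^2_{-1, q}[M](\{1, \ldots, n-2\}) = H_0^{\text{FI}}(H_q(F(M)))_{n-2}$. Since $d^2$ is $S_n$-equivariant, it suffices to compute it on such tensors. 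Applying the Leibniz rule gives
\[
d^2(t^2(L \otimes x)) = t^2(d^2(L) \otimes x) + (-1)^{1+0} t^2(L \otimes d^2(x)),
\]
and the second term vanishes because $x$ lies in column $p = -1$, so $d^2 x \in E^2_{-3, q+1}[M] = 0$.

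For the first term, Lemma \ref{bullseye} with $k = 2$ identifies $d^2(L)$ with the homology class $\psi(m_{n-1}, m_n) \in H_0^{\text{FI}}(H_1(F(\R^2)))_{2} = E^2_{-1, 1}[\R^2](\{n-1, n\})$ of $m_n$ orbiting $m_{n-1}$ counterclockwise. Hence
\[
d^2(t^2(L \otimes x)) = t^2(\psi(m_{n-1}, m_n) \otimes x) \in H_0^{\text{FI}}(H_{q+1}(F(M)))_n.
\]
The map $t^2$ in this position is, by its construction in Lemma \ref{Leibniz}, induced by the embedding $F_2(\R^2) \times F_{n-2}(M) \hookrightarrow F_n(M)$ coming from $e: M \sqcup \R^2 \hookrightarrow M$; on homology this is exactly the external product followed by $\iota'_*$ from the introduction. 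Pairing $x$ with $\psi(m_{n-1}, m_n)$ via this map is, by definition, the FIM$^+$-action of the morphism $([n-2] \hookrightarrow [n], \{n-1, n\})$ on $H_0^{\text{FI}}(H_q(F(M)))_{n-2}$, so extending by $S_n$-equivariance will recover the entire family of FIM$^+$-morphisms $[n-2] \hookrightarrow [n]$ indexed by the cosets of $S_2 \times S_{n-2}$ in $S_n$.

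The main obstacle I anticipate is this final identification: one must carefully unwind \cite[Definition 3.14]{miller2019higher} to confirm that $t^2$, built from the augmented semi-simplicial arc complex, agrees with the geometric map $\iota'_*$ defining the FIM$^+$-module structure on $H_0^{\text{FI}}(H_*(F(M)))$, and to verify that the sign representation of $S_2$ on $\mathcal{T}_2$ matches the ordered-matching-modulo-sign quotient built into the definition of morphisms in FIM$^+$. Once these conventions are lined up, the Leibniz rule and the bullseye formula yield the proposition immediately.
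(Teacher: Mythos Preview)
Your proposal is correct and follows essentially the same approach as the paper's own proof: both use the Leibniz rule of Lemma~\ref{Leibniz} to split $d^2$ on $t^2(L\otimes x)$, kill the second term because the $(-1)$-column maps to zero, apply Lemma~\ref{bullseye} to identify $d^2(L)$ with the orbiting class $\psi(m_{n-1},m_n)$, and then recognize the resulting $t^2(\psi\otimes x)$ as the FIM$^+$-action coming from $[n-2]\hookrightarrow[n]$. Your write-up is slightly more explicit about the element-level computation and about the final convention check, but the argument is the same.
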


\begin{proof}
Note that
\[
E^{2}_{1,q}[M](n) = \text{Ind}^{S_{n}}_{S_{2}\times S_{n-2}}\mathcal{T}_{2}\boxtimes H_{0}^{\text{FI}}\left(H_{q}(F(M))\right)_{n-2}.
\]
Since $M$ is $2$-dimensional,
\[
E^{2}_{1,q}[M](n) = t^{2}\left(E^{2}_{1, 0}[\R^{2}](2)\otimes E^{2}_{-1,q}[M](n-2)\right).
\]
For $p<-1$, all the elements $E_{p,q}[M]$ of the arc resolution spectral sequence are $0$, so
\[
d^{2}(E^{2}_{-1,q}[M](n-2))=0.
\]
Thus Lemma \ref{Leibniz}, shows
\[
d^{2}(E^{2}_{1,q}[M](n))=d^{2}\left(t^{2}(E^{2}_{1, 0}[\R^{2}](2)\otimes E^{2}_{-1,q}[M](n-2))\right)=t^{2}\left(d^{2}(E^{2}_{1, 0}[\R^{2}](2))\otimes E^{2}_{-1,q}[M](n-2) \right)
\]
By Lemma, \ref{bullseye} the image of $E^{2}_{1, 0}[\R^{2}](2)$ under $d^{2}$ corresponds to two points orbiting each other in $H_{1}(F_{2}(\R^{2}))_{2}$. Therefore, the image of
\[
t^{2}\left(d^{2}(E^{2}_{1, 0}[\R^{2}](2))\otimes E^{2}_{-1,q}[M](n-2) \right)\subset E^{2}_{-1, q+1}[M](n)
\]
corresponds to the cup product of the homology of two points orbiting each other in $H_{1}(F_{2}(\R^{2}))_{2}$ and the homology classes of $E^{2}_{1,q}[M](n-2)=H_{0}^{\text{FI}}\left(H_{q}.(F(M))\right)_{n-2}$ in $H_{0}^{\text{FI}}\left(H_{q+1}.(F(M))\right)_{n}$. This is the FIM$^{+}$-morphism arising from the inclusion $[n-2]\hookrightarrow[n]$.
\end{proof}

\section{Ordered Configuration Space of the Once-Punctured Torus}
\label{torus}

We begin this section by using Pagaria's calculation of the growth rate of the Betti numbers of the ordered configuration space of the torus \cite{pagaria2020asymptotic} to calculate the growth rate of the Betti numbers of the ordered configuration space of the once-punctured torus. We use this result and the arc resolution spectral sequence to prove an FIM$^{+}$-module version of Theorem \ref{mytheorem}.

\begin{thm}\label{pagaria}
(Pagaria \cite[Corollary 2.9]{pagaria2020asymptotic}) For $k\ge 3$ the $k^{\emph{th}}$ Betti number of $F_{n}(T)$ is of the form
\[
b_{k}(F_{n}(T))=c_{k}\binom{n}{2k-2}+o(n^{2k-2}),
\]
where $c_{k}$ is a constant greater than or equal to $\binom{2k-3}{k-3}$.

For $k\le 5$ the Betti numbers are
\[
b_{0}(F_{n}(T)) = 1,
\]
\[
b_{1}(F_{n}(T))  = 2n,
\]
\[
b_{2}(F_{n}(T))  = 2\binom{n}{3}+3\binom{n}{2}+n,
\]
\[
b_{3}(F_{n}(T)) = 14\binom{n}{4}+8\binom{n}{3}+2\binom{n}{2},
\]
\[
b_{4}(F_{n}(T)) = 32\binom{n}{6}+74\binom{n}{5}+33\binom{n}{4}+5\binom{n}{3},
\]
\[
b_{5}(F_{n}(T)) = 63\binom{n}{8}+427\binom{n}{7}+490\binom{n}{6}+154\binom{n}{5}+18\binom{n}{4}.
\]
\end{thm}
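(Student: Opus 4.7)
The plan is to apply the Kriz--Totaro DGA model for the rational cohomology of the ordered configuration space of a smooth complex projective variety to the elliptic curve $T$, thereby reducing the computation of Betti numbers to an asymptotic combinatorial count.

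First I would write down the Kriz model $(E(n), d)$ for $T$: as a graded-commutative $\Q$-algebra it is generated by degree-$1$ classes $\alpha_i, \beta_i$ for $i \in [n]$ (the pullbacks of a symplectic basis of $H^1(T;\Q)$ along the $i^{\text{th}}$ projection) and by degree-$1$ diagonal classes $G_{ij} = G_{ji}$ for $1 \le i \ne j \le n$, subject to $G_{ij}^2 = 0$ and the Arnold three-term relations, with differential determined by $d\alpha_i = d\beta_i = 0$ and $dG_{ij} = (\alpha_i - \alpha_j)(\beta_i - \beta_j)$. Totaro's theorem identifies $H^*(E(n), d)$ with $H^*(F_n(T); \Q)$. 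Normalizing the $G$-monomials using the Arnold relations, an additive basis of $E(n)$ is indexed by triples $(I, J, F)$ where $I, J \subseteq [n]$ and $F$ is a labeled forest on $[n]$; the total cohomological degree of the corresponding monomial is $|I| + |J| + |F|$.

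Next, to extract the growth rate I would filter $E(n)$ by the number of $G$-factors and analyze the resulting spectral sequence, whose $E_1$-page is controlled by the Arnold algebra of $T$. A basis element with $q$ edges in its forest (using $m \le 2q$ vertices) and $p = k - q$ alpha/beta decorations contributes a term of polynomial degree at most $m + p$ in $n$, so the naive count of basis elements is polynomial of degree at most $2k$. One then shows that the differential $d$, combined with the Arnold relations, cancels two orders of growth by matching each top-stratum closed monomial against an exact one, bringing the polynomial degree of the Betti numbers down to $2k-2$. A careful analysis of the dominant surviving stratum, consisting of decorated forests that are neither exact nor redundant modulo the Arnold relations, then produces the lower bound $c_k \ge \binom{2k-3}{k-3}$.

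For the explicit formulas in the range $k \le 5$, I would directly compute the cohomology of $E(n)$ in low total degree; this is feasible by hand for $k = 0, 1, 2$ and can be mechanized with a computer algebra system for $k = 3, 4, 5$. The main obstacle is the accounting for the differential: the naive count of surviving monomials substantially overestimates the Betti numbers, and one must identify precisely which classes are exact or redundant modulo the Arnold relations, especially in order to pin down the leading coefficient $c_k$ rather than merely bound it. Turning this combinatorial bookkeeping into a proof that $b_k(F_n(T)) = c_k\binom{n}{2k-2} + o(n^{2k-2})$ with the stated lower bound on $c_k$ is where the bulk of Pagaria's technical work in \cite{pagaria2020asymptotic} lies.
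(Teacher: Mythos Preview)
The paper does not prove this statement at all: Theorem~\ref{pagaria} is simply quoted from Pagaria's paper \cite[Corollary 2.9]{pagaria2020asymptotic} as a black box, with no argument given. The present paper only \emph{uses} this result, feeding it into the product decomposition $F_n(T) \simeq T \times F_{n-1}(T^\circ)$ to deduce Proposition~\ref{thetheorem}.

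Your proposal is therefore not comparable to anything in the paper. As a sketch of Pagaria's own argument it is in the right neighborhood---Pagaria does work with the Kriz model for $F_n(T)$ and extracts the asymptotics combinatorially---but your outline is too vague at the decisive step. Saying that ``the differential $d$, combined with the Arnold relations, cancels two orders of growth by matching each top-stratum closed monomial against an exact one'' is not an argument; it is a restatement of what needs to be proved. Pagaria's actual mechanism involves a careful analysis of the associated graded pieces and an explicit identification of the leading coefficient, and the lower bound $c_k \ge \binom{2k-3}{k-3}$ comes from exhibiting a specific family of surviving classes, not from a generic cancellation count. If you intend to reproduce the proof rather than cite it, you would need to make that matching explicit; as written, the proposal acknowledges this gap in its final sentence but does not close it.
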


We prove a similar result for the ordered configuration space of the once-punctured torus by noting that the torus is additive group.

\begin{prop}\label{thetheorem}
Let $T^{\circ}$ denote the once-punctured torus. For $k\ge 3$, the $k^{\emph{th}}$ Betti number of $F_{n}(T^{\circ})$ is a polynomial in $n$ of degree $2k-2$.  For $k\le 5$ the formulae for the $k^{\emph{th}}$ Betti numbers are
\[
b_{0}(F_{n}(T^{\circ})) = 1,
\]
\[
b_{1}(F_{n}(T^{\circ})) = 2n,
\]
\[
b_{2}(F_{n}(T^{\circ}))
=2\binom{n}{3}+5\binom{n}{2},
\]
\[
b_{3}(F_{n}(T^{\circ}))
= 14\binom{n}{4}+18\binom{n}{3},
\]
\[
b_{4}(F_{n}(T^{\circ})) = 32\binom{n}{6}+106\binom{n}{5}+79\binom{n}{4},
\]
\[
b_{5}(F_{n}(T^{\circ})) = 63\binom{n}{8}+490\binom{n}{7}+853\binom{n}{6}+432\binom{n}{5}.
\]
\end{prop}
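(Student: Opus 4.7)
The plan is to exploit the fact that the torus $T = \R^{2}/\Z^{2}$ is a topological group. Using translation, we get a homeomorphism
\[
F_{n}(T) \;\cong\; T \times F_{n-1}(T^{\circ}),
\]
given by sending $(x_{1},\dots,x_{n})$ to $(x_{1},\,x_{2}-x_{1},\,\dots,\,x_{n}-x_{1})$, where $T^{\circ} = T \setminus \{0\}$. Applying the K\"unneth formula with rational (or any field) coefficients and using that $T$ has Betti numbers $b_{0}(T)=1$, $b_{1}(T)=2$, $b_{2}(T)=1$ yields the recursion
\[
b_{k}(F_{n+1}(T)) \;=\; b_{k}(F_{n}(T^{\circ})) \;+\; 2\, b_{k-1}(F_{n}(T^{\circ})) \;+\; b_{k-2}(F_{n}(T^{\circ})),
\]
valid for all $k\ge 0$ and $n\ge 0$, with the convention that negative-degree Betti numbers vanish. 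Solving for $b_{k}(F_{n}(T^{\circ}))$ gives
\[
b_{k}(F_{n}(T^{\circ})) \;=\; b_{k}(F_{n+1}(T)) \;-\; 2\, b_{k-1}(F_{n}(T^{\circ})) \;-\; b_{k-2}(F_{n}(T^{\circ})),
\]
which is the engine of the proof.

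Next I would induct on $k$. For $k=0,1,2$, Pagaria's formulas in Theorem \ref{pagaria} feed directly into the recursion, and a short computation (substituting the binomial identities $\binom{n+1}{j} = \binom{n}{j}+\binom{n}{j-1}$ to reindex from $n+1$ to $n$) gives the claimed closed forms $1$, $2n$, and $2\binom{n}{3}+5\binom{n}{2}$. With those in hand, one plugs in Pagaria's polynomial expression for $b_{3}(F_{n+1}(T))$, $b_{4}(F_{n+1}(T))$, and $b_{5}(F_{n+1}(T))$ together with the inductively known formulas for $b_{k-1}$ and $b_{k-2}$ on $T^{\circ}$ to extract the explicit polynomials claimed for $k=3,4,5$. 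This is purely arithmetic bookkeeping with binomial coefficients, and reindexing from $n+1$ to $n$ is the only place where one must be careful.

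For the general degree statement ($k\ge 3$), I would argue by induction on $k$ as follows. By Theorem \ref{pagaria}, $b_{k}(F_{n+1}(T))$ is a polynomial in $n$ of degree $2k-2$ for $k\ge 3$ (and of lower degree for $k\le 2$). By the inductive hypothesis, $b_{k-1}(F_{n}(T^{\circ}))$ has degree at most $2(k-1)-2 = 2k-4$ (or is explicitly a polynomial of degree $\le 3$ when $k-1\le 2$), and $b_{k-2}(F_{n}(T^{\circ}))$ has even lower degree. Both of these are strictly less than $2k-2$ once $k\ge 3$, so the degree of the right-hand side of the recursion is controlled by the leading term $b_{k}(F_{n+1}(T))$, yielding the desired degree $2k-2$ for $b_{k}(F_{n}(T^{\circ}))$.

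There is no serious obstacle here; the only place that requires care is checking the small-$k$ base cases of the induction, since the degree formula $2k-2$ in Theorem \ref{pagaria} does not extend uniformly down to $k=0,1,2$ (the degree of $b_{2}$ is $3$, not $2$). One must therefore verify directly that the $k=3$ case of the recursion lands in the regime where $b_{k}(F_{n+1}(T))$ still dominates, i.e., that subtracting the degree-$3$ polynomial $b_{2}(F_{n}(T^{\circ}))$ does not boost the degree above $2k-2 = 4$. After that first step the induction runs uniformly.
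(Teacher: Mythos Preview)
Your proposal is correct and follows essentially the same approach as the paper: both use the product decomposition $F_{n}(T)\cong T\times F_{n-1}(T^{\circ})$ together with K\"unneth and Pagaria's degree result. The only cosmetic difference is that the paper inverts the Poincar\'e polynomial in one shot via the Taylor expansion $(1+t)^{-2}=\sum_{i\ge 0}(-1)^{i}(i+1)t^{i}$ to express $b_{k}(F_{n}(T^{\circ}))$ directly as $\sum_{m=0}^{k}(-1)^{k-m}(k+1-m)\,b_{m}(F_{n+1}(T))$, whereas you reach the same conclusion by a one-step recursion and induction on $k$; unfolding your induction recovers the paper's closed formula.
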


\begin{proof}
Since $T\simeq \R^{2}/ \Z^{2}$ is an additive group, we can decompose the ordered configuration space of the torus as a product:
\begin{gather*}
F_{n}(T)\simeq T\times F_{n-1}(T^{\circ})\\
(x_{1}, \dots, x_{n})\mapsto x_{1}\times (x_{2}-x_{1}, \dots, x_{n}-x_{1}),
\end{gather*}
where coordinates in $F_{n-1}(T^{\circ})$ are taken modulo $\Z^{2}$, and $x_{1}$ is the location of the puncture in $T^{\circ}$. See, for example, Cohen \cite[Example 2.6]{cohen2010introduction}.

Since Poincare polynomials respect product decompositions, we can write
\begin{gather*}
P(F_{n}(T))=P(T)\times P(F_{n-1}(T^{\circ})) = (1+2t+t^2)P(F_{n-1}(T^{\circ})).
\end{gather*}

These equations can be rearranged to give the Poincare polynomial for $F_{n-1}(T^{\circ})$ in terms of the Poincare polynomial for $F_{n}(T)$, 
\[
P(F_{n-1}(T^{\circ}))=\frac{P(F_{n}(T))}{1+2t+t^2}=\sum^{\infty}_{i=0}(-1)^{i}(i+1)t^{i}P(F_{n}(T)),
\]
where the second equality arises by expanding $(1+2t+t^{2})^{-1}$ as a Taylor series in $t$. 

%By explicitly expressing the Poincare polynomials in terms of Betti numbers, i.e., noting $P=\sum_{i=0}^{\infty}b_{i}t^{i}$, 
We have
\[
\sum_{k=0}^{\infty}b_{k}(F_{n-1}(T^{\circ}))t^{k}=\left(\sum^{\infty}_{i=0}(-1)^{i}(i+1)t^{i}\right)\left(\sum_{j=0}^{\infty}b_{j}(F_{n}(T))t^{j}\right) =\sum^{\infty}_{k=0}\sum^{k}_{m= 0}(-1)^{k-m}(k+1-m)b_{m}(F_{n}(T))t^{k}.
\]

This gives us a formula for the Betti numbers of $F_{n-1}(T^{\circ})$ in terms of the Betti numbers for $F_{n}(T)$,
\[
b_{k}(F_{n-1}(T^{\circ}))=\sum^{k}_{m= 0}(-1)^{k-m}(k+1-m)b_{m}(F_{n}(T)).
\]

By replacing $n-1$ with $n$ we see that
\[
b_{k}(F_{n}(T^{\circ}))=\sum^{k}_{m= 0}(-1)^{k-m}(k+1-m)b_{m}(F_{n+1}(T)),
\]
e.g., 
\[
b_{0}(F_{n}(T^{\circ})) = b_{0}(F_{n+1}(T)),
\] 
\[
b_{1}(F_{n}(T^{\circ}))=b_{1}(F_{n+1}(T))-2b_{0}(F_{n+1}(T)).
\]
\[
 b_{2}(F_{n}(T^{\circ}))= b_{2}(F_{n+1}(T))-2b_{1}(F_{n+1}(T))+3b_{0}(F_{n+1}(T)),
 \]
 etc.

Now apply Theorem \ref{pagaria}, which states that $b_{k}(F_{n}(T))$ is a polynomial in $n$ of degree $2k-2$, and for $k=0,1,2$, it is a polynomial of degree $0, 1, 3$. Reindexing from $n$ to $n+1$ does not change this since there cannot be any cancelation in the top degree. For $k\ge 3$, the above calculations prove that $b_{k}(F_{n}(T^{\circ}))$ can be written as a linear combination of $k+1$ polynomials in $n$ of degree $\le2k-2$, and that only one of these polynomials is of degree $2k-2$. When $k=0, 1, \text{or }2$, $b_{k}(F_{n}(T^{\circ}))$ can be expressed as linear combination of $k+1$ polynomials in $n$ of degree $\le 0,1, \text{or } 3$, respectively, with only one polynomial in each of these sums having top degree. Therefore, the top degree term in the sum has nonzero coefficient. Thus, for $k\ge 3$, $b_{k}(F_{n}(T^{\circ}))$ is polynomial in $n$ of degree $2k-2$, and, when $k=0, 1, 2$, it is polynomial in $n$ of degree $0, 1, 3$, respectively.

To get the formula for $b_{k}(F_{n}(T^{\circ}))$ for $k\le5$ apply the formula
\[
b_{k}(F_{n}(T^{\circ}))=\sum^{k}_{m= 0}(-1)^{k-m}(k+1-m)b_{m}(F_{n+1}(T))
\]
to the formulae for the Betti numbers of the ordered configuration space of the torus from Theorem \ref{pagaria}.
\end{proof}

Since $T^{\circ}$ is a noncompact manifold, we can apply the results of Church, Ellenberg, and Farb \cite{church2015fi} and Miller and Wilson \cite{miller2019higher} to study sequences of minimal generators of the homology groups of its ordered configuration space.

\begin{prop}\label{topgen}
For all $k\ge 3$ there are FB-modules $W$ dependent on $k$ such that,
\[
H_{k}(F(T^{\circ});\Q)=\bigoplus_{d=0}^{2k-2}M(W_{d}),
\]
and
\[
H_{0}(F(T^{\circ});\Q)=M(W_{0}), \indent H_{1}(F(T^{\circ});\Q)=\bigoplus_{d=0}^{1}M(W_{d}),\indent\text{and}\indent H_{2}(F(T^{\circ});\Q)=\bigoplus_{d=0}^{3}M(W_{d});
\]
moreover, in each case the top $W_{d}$ is nonzero.
\end{prop}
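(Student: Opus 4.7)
The plan is to read off the decomposition from the FI\#-structure on the rational homology of configuration spaces, and then use a simple dimension count to pin down the top nonzero piece. First I would observe that $F_*(T^\circ)$ carries not only the stabilization maps $\iota$ coming from the embedding $e\colon T^\circ \sqcup \R^2 \hookrightarrow T^\circ$, but also the forgetful maps $\pi\colon F_n(T^\circ) \to F_{n-1}(T^\circ)$, and these are equivariantly compatible up to homotopy; as noted after Theorem \ref{equiv}, this makes $F_*(T^\circ)$ an FI\#-homotopy-space, so $V := H_k(F(T^\circ); \Q)$ is an FI\#-module for each $k \geq 0$. Applying Theorem \ref{equiv} with $W_d := H_0^{\mathrm{FI}}(V)_d$ then immediately yields
\[
H_k(F(T^\circ); \Q) \;\cong\; \bigoplus_{d \geq 0} M(W_d),
\]
supplying the direct sum assertion of the proposition; what remains is to truncate at the correct degree and to show the top piece does not vanish.

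For this I would use the dimension identity $\dim M(W_d)_n = \dim(W_d)\binom{n}{d}$, recorded earlier in the excerpt, which gives
\[
b_k(F_n(T^\circ)) \;=\; \sum_{d \geq 0} \dim(W_d)\binom{n}{d}.
\]
Since the coefficients $\dim(W_d)$ are all nonnegative, no cancellation can occur at the top of this expansion in the basis $\{\binom{n}{d}\}_{d \geq 0}$, so the polynomial degree of $b_k(F_n(T^\circ))$ in $n$ equals $\max\{d : W_d \neq 0\}$. Combining this with Proposition \ref{thetheorem}, which computes that degree to be $2k-2$ for $k \geq 3$ and $0,1,3$ for $k = 0,1,2$, simultaneously caps the decomposition at the stated top index and forces the corresponding top $W_d$ to be nonzero.

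The whole argument is essentially dimension bookkeeping once the two structural inputs (Theorem \ref{equiv} and Proposition \ref{thetheorem}) are in hand, so I do not anticipate a real obstacle. The only point worth double-checking is that the forgetful maps genuinely upgrade $F_*(T^\circ)$ to an FI\#-homotopy-space (rather than merely an FI-space), since this is what allows invoking the FB/FI\#-equivalence; this holds because $T^\circ$ is an open manifold, exactly as used implicitly in the paper's discussion around Theorem \ref{equiv}.
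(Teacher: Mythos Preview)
Your proposal is correct and follows essentially the same approach as the paper: invoke the FI\#-module structure on $H_k(F(T^\circ);\Q)$ to obtain the decomposition $\bigoplus_d M(W_d)$ via Theorem~\ref{equiv}, compute $\dim M(W_d)_n = \dim(W_d)\binom{n}{d}$, and match the resulting polynomial degree against Proposition~\ref{thetheorem} to identify the top nonzero $W_d$. Your explicit remark that the nonnegativity of the $\dim(W_d)$ rules out cancellation is a nice clarification of a step the paper leaves implicit.
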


\begin{proof}
Since $T^{\circ}$ is noncompact, $H_{k}(F(T^{\circ}); \Q)$ is an FI\#-module. Theorem \ref{equiv} proves $H_{k}(F(T^{\circ}); \Q)$ is of the form $M(W)=\oplus_{d\ge 0}M(W_{d})$, for some FB-module $W$. Recall 
\[
M(W_{d})_{n}=W_{d}\otimes_{\Q[S_{d}]}\Q\cdot\text{Hom}_{FI}([d],[n]).
\]
From this we see that the dimension of $M(W_{d})_{n}$ is a polynomial in $n$ of degree $d$:
\[
\dim(M(W_{d})_{n})=\dim(W_{d})\cdot\binom{n}{d}.
\]
Therefore, the dimension of $M(W)_{n}$ is a polynomial in $n$ of degree $d$, with $d$ the largest integer such that $W_{d}\neq 0$. By Proposition \ref{thetheorem}, for $k\ge 3$, $b_{k}(F_{n}(T^{\circ})$ is a polynomial in $n$ of degree $2k-2$, and for $k=0,1,2$, $b_{k}(F_{n}(T^{\circ})$ is a polynomial in $n$ of degree $0, 1, 3$ respectively. Therefore, for $k\ge 3$,
\[
H_{k}(F(T^{\circ});\Q)=\bigoplus_{d=0}^{2k-2}M(W_{d}),
\]
and
\[
H_{0}(F(T^{\circ});\Q)=M(W_{0}), \indent H_{1}(F(T^{\circ});\Q)=\bigoplus_{d=0}^{1}M(W_{d}),\indent\text{and}\indent H_{2}(F(T^{\circ});\Q)=\bigoplus_{d=0}^{3}M(W_{d}),
\]
where the top degree term in each sum is nonzero.
\end{proof}

\begin{prop}\label{W1}
The FIM$^{+}$-module $\mathcal{W}_{1}^{T^{\circ}}$ is generated in degree $1$. 
\end{prop}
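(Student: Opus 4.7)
The plan is to localize the support of $\mathcal{W}_{1}^{T^{\circ}}$ using Proposition \ref{topgen}, then to analyze the arc resolution spectral sequence at the single remaining nontrivial degree.

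First I would apply Proposition \ref{topgen}: for $k\ge 3$, the FI\#-decomposition $H_{k}(F(T^{\circ});\mathbb{Q})=\bigoplus_{d=0}^{2k-2}M(W_{d})$ implies that $H_{0}^{\mathrm{FI}}(H_{k}(F(T^{\circ});\mathbb{Q}))_{n}=W_{n}$ vanishes whenever $n>2k-2$. Substituting $k=(n+1)/2$ for odd $n\ge 5$ gives $n=2k-1>2k-2$, so $\mathcal{W}_{1}^{T^{\circ}}(n)=0$. Combined with the built-in vanishing for even $n$, the only generation question that remains is whether the FIM$^{+}$-morphism $\mathcal{W}_{1}^{T^{\circ}}(1)\to\mathcal{W}_{1}^{T^{\circ}}(3)$ is surjective.

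Proposition \ref{d2isFIM} identifies this FIM$^{+}$-morphism with the spectral sequence differential $d^{2}\colon E^{2}_{1,1}[T^{\circ}](3)\to E^{2}_{-1,2}[T^{\circ}](3)=\mathcal{W}_{1}^{T^{\circ}}(3)$. By Proposition \ref{arcresprop}, $E^{\infty}_{-1,2}[T^{\circ}](3)=0$, so $E^{2}_{-1,2}[T^{\circ}](3)$ must be killed by differentials. The differentials $d^{r}$ that can land at $(-1,2)$ originate from $(r-1,3-r)$; for $r\ge 4$ the source sits in $q<0$ and is zero, for $r=2$ the source is $E^{2}_{1,1}[T^{\circ}](3)$, and for $r=3$ the source is $E^{3}_{2,0}[T^{\circ}](3)$. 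Since $\mathcal{T}_{1}=0$ forces the entire $p=0$ column of $E^{2}$ to vanish, both the $d^{2}$ out of $(2,0)$ and any $d^{2}$ into $(2,0)$ are zero, giving $E^{3}_{2,0}[T^{\circ}](3)=\mathcal{T}_{3}$. Consequently, surjectivity of the FIM$^{+}$-morphism is equivalent to showing that $d^{3}\colon\mathcal{T}_{3}\to E^{3}_{-1,2}[T^{\circ}](3)$ is zero.

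The step I expect to be the main obstacle is this vanishing of $d^{3}$. By Lemma \ref{bullseye}, $d^{3}$ applied to Reutenauer's basis element $[[m_{1},m_{2}],m_{3}]\in\mathcal{T}_{3}$ produces the bullseye class $\psi(\psi(m_{1},m_{2}),m_{3})\in E^{3}_{-1,2}[T^{\circ}](3)=E^{2}_{-1,2}[T^{\circ}](3)/\mathrm{Im}(d^{2})$. The task therefore reduces to showing that this bullseye class lies in $\mathrm{Im}(d^{2})$, which by Proposition \ref{d2isFIM} is the $\mathbb{Q}[S_{3}]$-span of the classes $\alpha_{i}\wedge\psi(m_{j},m_{k})$ with $\alpha\in H_{1}(T^{\circ};\mathbb{Q})$ and $\{i,j,k\}=\{1,2,3\}$. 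The idea is to use the Leibniz rule of Lemma \ref{Leibniz} together with the nontriviality of $H_{1}(T^{\circ};\mathbb{Q})=\mathbb{Q}\{a,b\}$ to homotope the outer orbit of $m_{3}$ into a nontrivial $1$-cycle on $T^{\circ}$, thereby rewriting the bullseye as a sum of such products modulo $\mathrm{Im}(d^{2})$. As an independent cross-check, one can compute $\mathcal{W}_{1}^{T^{\circ}}(3)$ as a $2$-dimensional $S_{3}$-representation using the Betti number formula $b_{2}(F_{3}(T^{\circ}))=17$ from Proposition \ref{thetheorem} together with Lefschetz trace calculations on $F_{3}(T^{\circ})$, and verify that the two FIM$^{+}$-image classes $a\wedge\psi(m_{2},m_{3})$ and $b\wedge\psi(m_{2},m_{3})$ are linearly independent in it.
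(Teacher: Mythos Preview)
Your reduction is exactly the paper's: use Proposition~\ref{topgen} to see $\mathcal{W}_{1}^{T^{\circ}}(n)=0$ for odd $n\ge 5$, reduce to surjectivity of $d^{2}\colon E^{2}_{1,1}[T^{\circ}](3)\to E^{2}_{-1,2}[T^{\circ}](3)$ via Proposition~\ref{d2isFIM}, and recognize that the only obstruction is the vanishing of $d^{3}\colon \mathcal{T}_{3}\to E^{3}_{-1,2}[T^{\circ}](3)$. You have also correctly identified this last step as the crux.

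The gap is in how you propose to handle that step. The Leibniz rule of Lemma~\ref{Leibniz} does not help here: the only way to express $[[m_{1},m_{2}],m_{3}]\in E^{3}_{2,0}[T^{\circ}](3)$ as $t^{3}(a\otimes b)$ with the correct bidegrees forces either $a\in E^{3}_{2,0}[\R^{2}](3)$ and $b\in E^{3}_{-1,0}[T^{\circ}](0)$ or the symmetric choice, and the Leibniz formula then just reproduces the bullseye class without producing any relation to $H_{1}(T^{\circ})$. Your phrase ``homotope the outer orbit of $m_{3}$ into a nontrivial $1$-cycle on $T^{\circ}$'' is the right geometric intuition, but making it precise is exactly the content of the paper's argument, and it is not a rewriting of the bullseye as a sum of product classes modulo $\mathrm{Im}(d^{2})$; rather, the paper shows the stronger statement that the bullseye is already zero in $H_{2}(F_{3}(T^{\circ}))$. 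It does this by an explicit construction: embed $(S^{1})^{3}$ in $(T^{\circ})^{3}$ (one $S^{1}$ for the $b$-orbit of $m_{2}$ around $m_{1}$, the other two sending $m_{1}$ and $m_{3}$ around a handle), excise a solid torus neighborhood of the locus where the particles collide, and check that the boundary $2$-torus of the resulting $3$-manifold in $F_{3}(T^{\circ})$ represents the bullseye class. Your cross-check is also incomplete: knowing $\dim\mathcal{W}_{1}^{T^{\circ}}(3)=2$ and computing the $S_{3}$-character via Lefschetz does not by itself establish that the specific classes $a\wedge\psi(m_{2},m_{3})$ and $b\wedge\psi(m_{2},m_{3})$ are nonzero in the quotient $H_{0}^{\mathrm{FI}}(H_{2})_{3}$, let alone independent there; you would still need a geometric or algebraic argument for nonvanishing, which is essentially the same missing ingredient.
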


\begin{proof}
By Proposition \ref{topgen} if $k\ge 3$,
\[
H_{k}(F(T^{\circ});\Q)=\bigoplus_{d=0}^{2k-2}M(W_{d}),
\]
and
\[
H_{0}(F(T^{\circ});\Q)=M(W_{0}), \indent H_{1}(F(T^{\circ});\Q)=\bigoplus_{d=0}^{1}M(W_{d}),\indent\text{and}\indent H_{2}(F(T^{\circ});\Q)=\bigoplus_{d=0}^{3}M(W_{d});
\]
moreover, in each case the top $W_{d}$ is nonzero. By Theorem \ref{equiv} there is an equivalence $H_{0}^{\text{FI}}(M(W))\cong W$. Setting $k=\frac{n+1}{2}$, if $n\ge 5$, then $k\ge 3$ and
\[
H_{\frac{n+1}{2}}(F(T^{\circ}); \Q)=\bigoplus^{n-1}_{d=0}M(W_{d}),\indent\text{and}\indent H^{\text{FI}}_{0}\left(H_{\frac{n+1}{2}}(F(T^{\circ}); \Q)\right)_{n}=0.
\]
If $n=3$, then
\[
H_{2}(F(T^{\circ}); \Q)=\bigoplus^{2}_{d=0}M(W_{d}), \indent\text{and}\indent H^{\text{FI}}_{0}\left(H_{2}(F(T^{\circ}); \Q)\right)_{2}=W_{2}\neq0.
\]
Similarly, if $n=1$, then 
\[
H_{1}(F(T^{\circ}); \Q)=\bigoplus^{1}_{d=0}M(W_{d}), \indent\text{and}\indent H^{\text{FI}}_{0}\left(H_{1}(F(T^{\circ}); \Q)\right)_{1}=W_{1}\neq0.
\]

Thus, $\mathcal{W}_{1}^{T^{\circ}}(n)=H^{\text{FI}}_{0}(H_{\frac{n+1}{2}}(F(T^{\circ})))_{n}$ is nonzero only when $n=1,3$. Therefore, it suffices to check that $\mathcal{W}_{1}^{T^{\circ}}(3)$ is generated by $\mathcal{W}_{1}^{T^{\circ}}(1)$ as an FIM$^{+}$-module. 

Note that $\mathcal{W}_{1}^{T^{\circ}}(3)$ is a term in the arc resolution spectral sequence
\[
\mathcal{W}_{1}^{T^{\circ}}(3)=H_0^{\text{FI}}\Big(H_2(F(M))\Big)_{3}=E^{2}_{-1,2}[T^{\circ}](3).
\]

Since $\mathcal{W}_{1}^{T^{\circ}}(1)=H_0^{\text{FI}}\Big(H_{1}(F(M))\Big)_{1}$ and $E^{2}_{1,1}[T^{\circ}](3)=\text{Ind}^{S_{3}}_{S_{2}\times S_{1}}\mathcal{T}_{2}\boxtimes H_0^{\text{FI}}\Big(H_{1}(F(M))\Big)_{1}$, Proposition \ref{d2isFIM} implies $\mathcal{W}_{1}^{T^{\circ}}(3)$ is generated by $\mathcal{W}_{1}^{T^{\circ}}(1)$ as an FIM$^{+}$-module if
\[
d^{2}(E^{2}_{1,1}[T^{\circ}](3))=E^{2}_{-1,2}[T^{\circ}](3).
\]
Since 
\[
p+q+2 = -1+2+2 =3 \le 3 = n,
\]
Proposition \ref{arcresprop} implies $E^{\infty}_{-1,2}[T^{\circ}](3) =0$. As the arc resolution spectral sequence is zero in the lower half plane, it follows that if
\[
d^{3}:E^{3}_{2,0}[T^{\circ}](3)\to E^{3}_{-1,2}[T^{\circ}](3)
\]
is the zero map, then 
\[
d^{2}\left(E^{2}_{1,1}[T^{\circ}](3)\right)=E^{2}_{-1,2}[T^{\circ}](3)
\]
since for $r\ge 2$ the only entry of the spectral sequence with nonzero image under $d^{r}$ mapping into $E_{-1,2}^{r}[T^{\circ}](3)$ would be $E^{2}_{1,1}[T^{\circ}](3)$, and this map must be surjective as $E_{-1,2}^{\infty}[T^{\circ}](3)=0$. See Figure \ref{W1fig}. Since $E^{3}_{2,0}[T^{\circ}](3)= \mathcal{T}_{3}=\mathcal{L}_{3}$, Lemma \ref{bullseye} implies the $d^{3}$ map from $E^{3}_{2,0}[T^{\circ}](3)$ being the zero map is equivalent to the image of two particles orbiting a third clockwise in $T^{\circ}$ like a bullseye, as in Figure \ref{tripbull}, being homologically trivial in $H_{2}(F_{3}(T^{\circ}))$. 

\begin{figure}[H]
\centering
\begin{tikzpicture} \footnotesize
  \matrix (m) [matrix of math nodes, nodes in empty cells, nodes={minimum width=3ex, minimum height=5ex, outer sep=2ex}, column sep=3ex, row sep=3ex]{
 3    &  H_0^{\text{FI}}\Big(H_3(F(T^{\circ}))\Big)_{3}  &0 &  0& 0& \\  
 2    &  H_0^{\text{FI}}\Big(H_2(F(T^{\circ}))\Big)_{3} & 0&  0    & 0&  \\          
1     &  0   &0    & \text{Ind}_{S_{2}\times S_{1}}^{S_3} \mathcal{T}_{2} \boxtimes H_0^{\text{FI}} (H_1( F(T^{\circ})))_{1}  &0& \\             
 0     &  0  & 0 & 0  & \mathcal{T}_{3}& \\       
 \quad\strut &   -1  &  0  &  1  & 2  &\\}; 

 \draw[thick] (m-1-1.east) -- (m-5-1.east) ;
 \draw[thick] (m-5-1.north) -- (m-5-5.north east) ;
 
 \draw[-stealth, red] (m-3-4) -- (m-2-2)[midway,above] node [midway,below] {$d^2$} ;
\draw[-stealth, green] (m-4-5) -- (m-2-2) [midway,above] node [midway,below] {$d^3$};
 
\end{tikzpicture}
\caption{The $E^{2}[T^{\circ}](3)$-page of the arc resolution spectral sequence}
\label{W1fig}
\end{figure}
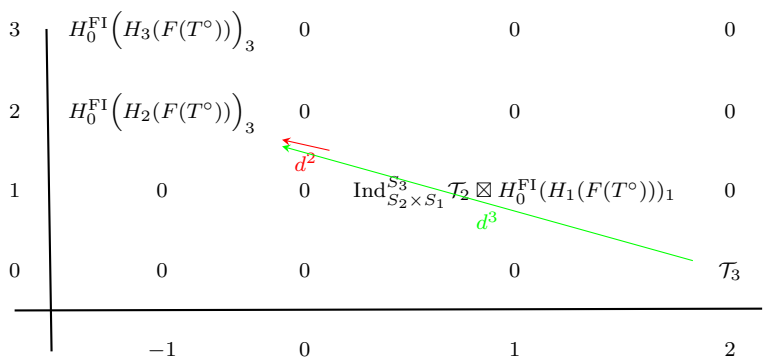

\begin{figure}[H]
\centering
\includegraphics[width = 8cm]{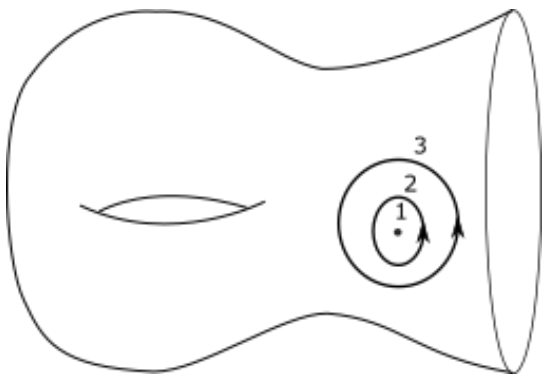}
\caption{A representative of the homology class of the image of $d^{3}:E^{3}_{2,0}[T^{\circ}](3)\to E^{3}_{-1,2}[T^{\circ}](3)$}
\label{tripbull}
\end{figure}

To see that two particles orbiting a third in $F_{3}(T^{\circ})$ is homologically trivial, we show that there is an embedding of a $3$-manifold with boundary in $F_{3}(T^{\circ})$, whose boundary corresponds to two particles orbiting a third in $T^{\circ}$. We start by embedding $(S^{1})^{3}$ in $(T^{\circ})^{3}$. Identify $T^{\circ}$ with $(\R/\Z)^{2}$ where the puncture is located at $(0,0)$, and let $S^{1}$ be identified with $\R/\Z$. Consider the map
\[
f:(\R/\Z)^{3}\cong(S^{1})^{3}\to (T^{\circ})^{3}\cong\left((\R/\Z)^{2}\right)^{3}
\]
\[
f(a,b,c)=\left(\frac{1}{2}, a, \frac{1}{2}+\frac{1}{8}\cos(2\pi b), a+\frac{1}{8}\sin(2\pi b), c, \frac{1}{2}\right).
\]

The map $f$ does not extend to an embedding of $(S^{1})^{3}$ in $F_{3}(T^{\circ})$. This can be seen by overlaying the three copies of $T^{\circ}$ onto a single $T^{\circ}$, as in the left side of Figure \ref{obstructbull}. To extend $f$ to a map to $F_{3}(T^{\circ})$, excise the region defined by the parameters $\frac{1}{4}<a<\frac{3}{4}$, $0\le b< 1$, and $\frac{1}{4}<c<\frac{3}{4}$, as in the right side of Figure \ref{obstructbull}. This is a solid torus $S^{1}\times D^{2}$, where the $S^{1}$ arises from the parameter $0\le b< 1$, and $D^{2}$ is the region defined by the parameters $\frac{1}{4}<a<\frac{3}{4}$, $\frac{1}{4}<c<\frac{3}{4}$. The boundary of the resulting manifold is a 2-torus defined by the product of the boundary of the square given by restricting the parameters $\frac{1}{4}<a<\frac{3}{4}$, $\frac{1}{4}<c<\frac{3}{4}$ and the circle $0\le b< 1$. We show that this boundary corresponds to two particles orbiting a third particle counterclockwise, as in Figure \ref{boundarypic}.

\begin{figure}[H]
\centering
\includegraphics[width = 8cm]{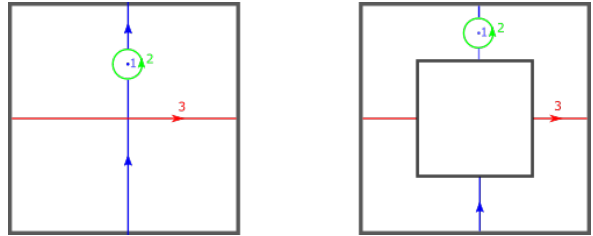}
\caption{Left: The projection onto $T^{\circ}$, where the puncture is at the corners, of the embedding of $(S^{1})^{3}$ into $(T^{\circ})^{3}$. Note that the paths of the particles intersect. Right: An extension of the embedding to $F_{3}(T^{\circ})$ given by excising a neighborhood around the intersection.}
\label{obstructbull}
\end{figure}

\begin{figure}[H]
\centering
\includegraphics[width = 8cm]{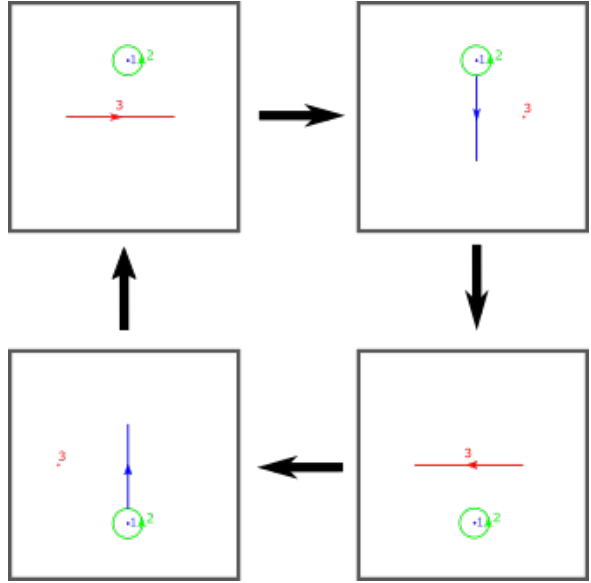}
\caption{The boundary of the 3-manifold in $F_{3}(T^{\circ})$}
\label{boundarypic}
\end{figure}

Fix $a=\frac{3}{4}$. The first particle is fixed at $(\frac{1}{2}, \frac{3}{4})$, the second particle freely orbits it in a circle given by $(\frac{1}{2}+\frac{1}{8}\cos(2\pi b), \frac{3}{4}+\frac{1}{8}\sin(2\pi b))$, and the third particle goes in a straight line from $(\frac{1}{4}, \frac{1}{2})$ to $(\frac{3}{4}, \frac{1}{2})$, i.e., underneath the orbiting pair from left to right. See the top left image in Figure \ref{boundarypic}.

Now fix $c=\frac{3}{4}$, and let $a$ vary from $\frac{3}{4}$ to $\frac{1}{4}$. This fixes the third particle at $(\frac{3}{4}, \frac{1}{2})$, and sends the first particle from $(\frac{1}{2}, \frac{3}{4})$ to $(\frac{1}{2}, \frac{1}{4})$ along a straight line, and the second particle freely orbits the first while following it. This corresponds to the orbiting pair going going to the left of the third particle from top to bottom, as in the top right of Figure \ref{boundarypic}.

Fix the first particle at $(\frac{1}{2}, \frac{1}{4})$, while the second freely orbits it along $(\frac{1}{2}+\frac{1}{8}\cos(2\pi b), \frac{1}{4}+\frac{1}{8}\sin(2\pi b))$. Let the third particle moves from $(\frac{3}{4}, \frac{1}{2})$ to $(\frac{1}{4}, \frac{1}{2})$ in a straight line. This corresponds to the third particle going above the orbiting pair from right to left, as in the bottom right of Figure \ref{boundarypic}.

Finally we finish the square, by fixing $c=\frac{1}{4}$, i.e., holding the third particle at $(\frac{1}{4}, \frac{1}{2})$. The first particle moves up by letting $a$ go from $\frac{1}{4}$ to $\frac{3}{4}$, while the second particle freely orbits it. This corresponds to sending the orbiting pair from bottom to top to the right of the third particle. See the bottom left of Figure \ref{boundarypic}.

This boundary corresponds to the third particle orbiting the orbiting pair, as in Figure \ref{tripbull}. Since this is the boundary of a 3-manifold in $F_{3}(T^{\circ})$, it is homologically trivial in $H_{2}(F_{3}(T^{\circ}))$. It corresponds to the image of $\mathcal{T}_{3}=\mathcal{L}_{3}$ under $d^{3}$, so \[
d^{3}(E^{3}_{2,0}[T^{\circ}](3))=0.
\]
Therefore,
\[
d^{2}\left(E^{2}_{1,1}[T^{\circ}](3)\right)=E^{2}_{-1,2}[T^{\circ}](3),
\] 
proving that $\mathcal{W}_{1}^{T^{\circ}}(3)$ is generated by $\mathcal{W}_{1}^{T^{\circ}}(1)$ as an FIM$^{+}$-module, and $\mathcal{W}_{1}^{T^{\circ}}(n)$ is generated in degree $1$.
\end{proof}

Now we prove a partial FIM$^{+}$-module version of Theorem \ref{mytheorem}.

\begin{thm}\label{genThm}
The sequence of minimal generators
\[
\mathcal{W}^{T^{\circ}}_{2}(n)=H^{\emph{FI}}_{0}\left(H_{\frac{n+2}{2}}(F(T^{\circ}); \Q)\right)_{n}
\]
is finitely generated as an FIM$^{+}$-module in degree $\le 4$; moreover, it is stably nonzero.
\end{thm}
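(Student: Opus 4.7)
\textbf{Proof plan for Theorem \ref{genThm}.} The plan splits along the two claims. For the \emph{stably nonzero} assertion, the easy direction, I would derive it directly from Proposition \ref{topgen}. For $n = 2k - 2$ with $k \ge 3$ (equivalently, even $n \ge 4$), unwinding the definition gives $\mathcal{W}_2^{T^\circ}(n) = H_0^{\text{FI}}(H_k(F(T^\circ); \Q))_n$; under the decomposition $H_k(F(T^\circ); \Q) = \bigoplus_{d=0}^{2k-2} M(W_d)$ of Proposition \ref{topgen}, this is precisely the top summand $W_{2k-2}$, which Proposition \ref{topgen} guarantees is nonzero.

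For \emph{generation in degree $\le 4$}, I would argue by induction that the FIM$^{+}$-morphism $\mathcal{W}_2^{T^\circ}(n-2) \to \mathcal{W}_2^{T^\circ}(n)$ is surjective for every even $n \ge 6$; iterating from the base cases $n = 2, 4$ then yields generation in degree $\le 4$. By Proposition \ref{d2isFIM} this morphism is the arc resolution differential $d^2 \colon E^2_{1, n/2}[T^\circ](n) \to E^2_{-1, (n+2)/2}[T^\circ](n)$. Proposition \ref{arcresprop} gives $E^\infty_{-1, (n+2)/2}[T^\circ](n) = 0$ for $n \ge 4$, so it suffices to verify that every higher incoming differential $d^r$ ($r \ge 3$) at position $(-1, (n+2)/2)$ is trivial.

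Each potential source $E^2_{r-1, (n+2)/2 - r + 1}[T^\circ](n)$ equals $\text{Ind}^{S_n}_{S_r \times S_{n-r}} \mathcal{T}_r \boxtimes W_{n-r}$ with $W_{n-r}$ the $(n-r)$-th FB-summand of $H_{(n+2)/2 - r + 1}(F(T^\circ); \Q)$. A direct case check against the polynomial-degree bounds from Proposition \ref{topgen} (top FB-degree $2k-2$ for $k \ge 3$ and $0, 1, 3$ for $k = 0, 1, 2$) shows $W_{n-r} = 0$ for every $r \ge 3$ whenever $n \ge 8$, so all higher $d^r$ vanish automatically there. The only exception is $n = 6$, $r = 3$, where $E^2_{2, 2}[T^\circ](6) = \text{Ind}^{S_6}_{S_3 \times S_3}(\mathcal{T}_3 \boxtimes W_3)$ is nonzero ($W_3 \subset H_2(F(T^\circ); \Q)$ is nonzero by Proposition \ref{thetheorem}). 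I would kill it on the $E^3$-page by a Leibniz argument: a typical generator $t^2(a \otimes b)$ has $a \in \mathcal{T}_3 \subset E^2_{2, 0}[\R^2](3)$ a Reutenauer commutator and $b \in W_3 = E^2_{-1, 2}[T^\circ](3)$; Lemma \ref{bullseye} gives $d^2 a = 0$; Proposition \ref{W1} (whose proof shows the $(n{=}3)$-level $d^2$ surjects onto $E^2_{-1, 2}[T^\circ](3)$) writes $b = d^2 c$ for some $c \in E^2_{1, 1}[T^\circ](3)$; and the Leibniz rule (Lemma \ref{Leibniz}) then yields $d^2 t^2(a \otimes c) = t^2(a \otimes b)$. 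Thus $t^2(\mathcal{T}_3 \otimes W_3) \subset \text{im}(d^2)$ from $E^2_{4, 1}[T^\circ](6)$; taking $S_6$-spans gives $E^2_{2, 2}[T^\circ](6) \subset \text{im}(d^2)$ and hence $E^3_{2, 2}[T^\circ](6) = 0$, killing the only obstructing higher differential.

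The main obstacle is precisely the $n = 6$ case: there the small-$k$ homology of $F(T^\circ)$ leaks into the spectral sequence and the vanishing cannot come from a pure dimension count. It forces the Leibniz/bullseye interplay above together with the input of Proposition \ref{W1}; for $n \ge 8$ the proof reduces to a routine dimension-count vanishing check based on Proposition \ref{topgen}.
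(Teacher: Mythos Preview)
Your proposal is correct and follows essentially the same route as the paper: stable nonvanishing from Proposition \ref{topgen}, vanishing of the higher arc-resolution sources by the FB-degree bounds for $n\ge 8$ (and for $n=6$, $r=4,5$), and the $n=6$, $r=3$ obstruction killed via the Leibniz rule together with the surjectivity of $d^2$ at level $3$ coming from Proposition \ref{W1}. The only cosmetic difference is that you invoke Lemma \ref{bullseye} to get $d^2 a=0$ for $a\in\mathcal{T}_3$, whereas the paper observes more directly that the target $E^2_{0,1}[\R^2](3)$ lies in the identically-zero $0^{\text{th}}$ column.
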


\begin{proof}
If $k\ge 3$, then Proposition \ref{topgen} proves
\[
H_{k}(F(T^{\circ});\Q)=\bigoplus_{d=0}^{2k-2}M(W_{d}),
\]
and $W_{2k-2}$ is nonzero. By Theorem \ref{equiv} there is an equivalence $H_{0}^{\text{FI}}(M(W))\cong W$. Thus, if $n$ is an even number bigger than $3$ and we set $k=\frac{n+2}{2}$, then $k\ge 3$, $2k-2=2\left(\frac{n+2}{2}\right)-2=n$, and
\[
\mathcal{W}^{T^{\circ}}_{2}(n)=H^{\text{FI}}_{0}\left(H_{\frac{n+2}{2}}(F(T^{\circ}); \Q)\right)_{n} \cong W_{n}\neq 0.
\]
Therefore, for even $n\ge 3$, $\mathcal{W}^{T^{\circ}}_{2}(n)$ is nonzero, and the FIM$^{+}$-module $\mathcal{W}^{T^{\circ}}_{2}(n)$ is stably nonzero.

Next, we show that $\mathcal{W}_{2}^{T^{\circ}}$ is generated in degree at most $6$. The $E^{2}$-page of the arc resolution spectral sequence for the once-punctured torus is of the form
\[
E^{2}_{p,q}[T^{\circ}](n)=\text{Ind}^{S_{n}}_{S_{p+1}\times S_{n-p-1}}\mathcal{T}_{p+1}\boxtimes H^{\text{FI}}_{0}\big(H_{q}(F(T^{\circ})))_{n-p-1},
\]

so $\mathcal{W}_{2}^{T^{\circ}}(n)$ is an element of the $-1^{\text{st}}$-column of the $E^{2}$-page:
\[
\mathcal{W}_{2}^{T^{\circ}}(n) = H^{\text{FI}}_{0}\left(H_{\frac{n+2}{2}}(F(T^{\circ}))\right)_{n} = \text{Ind}^{S_{n}}_{S_{0}\times S_{n}}\mathcal{T}_{0}\boxtimes H^{\text{FI}}_{0}\big(H_{\frac{n+2}{2}}(F(T^{\circ})))_{n} =  E^{2}_{-1,\frac{n+2}{2}}[T^{\circ}](n).
\]

We will show that for $r\ge 2$ the only differential $d^{r}$ with nonzero image in $E^{r}_{-1,\frac{n+2}{2}}[T^{\circ}](n)$ is $d^{2}$, and that this differential is surjective; see Figure \ref{e2n8}. For $n\ge 8$, and $0\le q\le \frac{n-2}{2}$, consider $E^{2}_{p,q}[T^{\circ}](n)$ such that $p+q=\frac{n+2}{2}$, i.e., the slope $-1$ diagonal starting at, but not including, $E^{2}_{1,\frac{n}{2}}[T^{\circ}](n)$. The differentials of the spectral sequence will map the subquotients of these elements of the spectral sequence into $E_{-1,\frac{n+2}{2}}[T^{\circ}](n)$, and elements on this diagonal are of the form
\[
E^{2}_{\frac{n+2}{2}-q, q}[T^{\circ}](n)=\text{Ind}^{S_{n}}_{S_{\frac{n+2}{2}-q+1}\times S_{\frac{n-2}{2}+q-1}}\mathcal{T}_{\frac{n+2}{2}-q+1}\boxtimes H^{\text{FI}}_{0}\left(H_{q}(F(T^{\circ}))\right)_{\frac{n-2}{2}+q-1}.
\]
For $q\ge 3$,
\[
H_{q}(F(T^{\circ}))=\bigoplus_{d = 0}^{2q-2}M(W_{d}).
\]
Since $n\ge 8$ and $3\le q\le \frac{n-2}{2}$, it follows that $2q-2 < \frac{n-2}{2}+q-1$, so the $W_{\frac{n-2}{2}+q-1}$-term in the decomposition is $0$. Additionally, recall that
\[
H_{0}(F(T^{\circ}))=M(W_{0}),\indent H_{1}(F(T^{\circ}))=\bigoplus_{d = 0}^{1}M(W_{d}),\indent \text{and} \indent H_{2}(F(T^{\circ}))=\bigoplus_{d = 0}^{3}M(W_{d});
\]
since $n\ge 8$, we have $3<\frac{n-2}{2}+2-1$, $1< \frac{n-2}{2}+1-1$, and $0<\frac{n-2}{2}+0-1$, so the $W_{\frac{n-2}{2}+q-1}$-term in the decomposition is $0$ in these cases too. This proves that for $n\ge 8$, and $0\le q\le \frac{n-2}{2}$
\[
H^{\text{FI}}_{0}\left(H_{q}(F(T^{\circ}))\right)_{\frac{n-2}{2}+q-1}=0.
\]
Thus, for $n\ge 8$,  $0\le q\le \frac{n-2}{2}$, and $r\ge 2$, $E^{r}_{\frac{n+2}{2}-q, q}[T^{\circ}](k)=0$.

\begin{figure}[H]
\centering
\begin{tikzpicture} \footnotesize
  \matrix (m) [matrix of math nodes, nodes in empty cells, nodes={minimum width=3ex, minimum height=5ex, outer sep=2ex}, column sep=3ex, row sep=3ex]{
  \frac{n+2}{2}    & H_0^{\text{FI}}\Big(H_{\frac{n+2}{2}}(F(T^{\circ}))\Big)_{n}  & 0 &  \text{Ind}_{S_{2}\times S_{n-2}}^{S_n} \mathcal{T}_{2} \boxtimes H_0^{\text{FI}} (H_{\frac{n+2}{2}}( F(T^{\circ})))_{n-2}   &\text{Ind}_{S_{3}\times S_{n-3}}^{S_{n}} \mathcal{T}_{3} \boxtimes H_0^{\text{FI}} (H_{\frac{n+2}{2}}( F(T^{\circ})))_{n-3} &  \\  
 \frac{n}{2}    &  0  &0 &  \text{Ind}_{S_{2}\times S_{n-2}}^{S_n} \mathcal{T}_{2} \boxtimes H_0^{\text{FI}} (H_{ \frac{n}{2}} (F(T^{\circ})))_{n-2}   &\text{Ind}_{S_{3}\times S_{n-3}}^{S_n} \mathcal{T}_{3} \boxtimes H_0^{\text{FI}} (H_{ \frac{n}{2}} (F(T^{\circ})))_{n-3} &\\  
\frac{n-2}{2}    &  0 & 0&  0    &0 &  \\          
\frac{n-4}{2}     &  0   &0    & 0  &0 & \\             
 \frac{n-6}{2}    &  0  & 0 & 0  &0  & \\       
 \quad\strut &   -1  &  0  &  1  & 2  &\\}; 

\draw[-stealth, red] (m-2-4) -- (m-1-2)[midway,above] node [midway,below] {$d^2$} ;
\draw[-stealth, green] (m-3-5) -- (m-1-2) [midway,above] node [midway,below] {$d^3$};

 \draw[thick] (m-1-1.east) -- (m-6-1.east) ;
 \draw[thick] (m-6-1.north) -- (m-6-6.north east) ;
\end{tikzpicture}
\caption{$E^{2}_{p,q}[T^{\circ}](n)$ for $n\ge 8$}
\label{e2n8}
\end{figure}
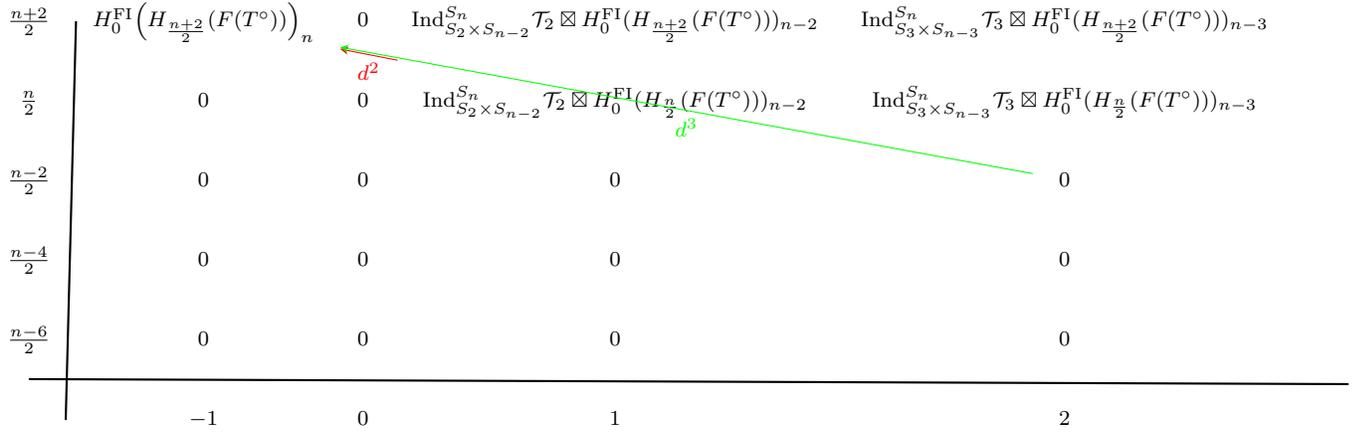

Thus, for $r\ge 2$, the only differential, $d^{r}$, that can have non-zero image in $E^{r}_{-1,\frac{n+2}{2}}[T^{\circ}](n)$ is $d^{2}$, as this is the only differential with nonzero domain. Since $n\ge 8$, Proposition \ref{arcresprop} proves $E_{-1, \frac{n+2}{2}}^{\infty}[T^{\circ}](n) = 0$ as $p+q+2=-1+\frac{n+2}{2}+2\le n$. Therefore,
\[
d^{2}\left(E^{2}_{1,\frac{n-2}{2}}[T^{\circ}](n)\right)= E^{2}_{-1,\frac{n+2}{2}}[T^{\circ}](n).
\]
Proposition \ref{d2isFIM} proves that $d^{2}$ from the $1^{\text{st}}$ column to the $-1^{\text{st}}$ column corresponds to the FIM$^{+}$-module morphism induced from $[n-2]\hookrightarrow[n]$. Since $d^{2}$ is surjective for all $n\ge 8$, $\mathcal{W}_{2}^{T^{\circ}}(n)$ is generated as a FIM$^{+}$-module by $\mathcal{W}_{2}^{T^{\circ}}(n-2)$.

Now we prove that $\mathcal{W}_{2}^{T^{\circ}}(6)$ is generated as an FIM$^{+}$-module from $\mathcal{W}_{2}^{T^{\circ}}(4)$. We first show that $E^{2}_{-1, 4}[T^{\circ}](6)$ is nonzero.

Since 
\[
E^{2}_{-1, 4}[T^{\circ}](6)=H^{\text{FI}}_{0}\left(H_{4}(F(T^{\circ}))\right)_{6}=\mathcal{W}_{2}^{T^{\circ}}(6),
\]
and by Proposition \ref{topgen},
\[
H_{4}(F(T^{\circ}))=\bigoplus_{d = 0}^{6}M(W_{d}),\indent\text{with}\indent W_{6}\neq0
\]
it follows that $E^{2}_{-1, 4}[T^{\circ}](6)$ is nonzero. Moreover, Proposition \ref{topgen} proves that
\[
H_{1}(F(T^{\circ}))=\bigoplus_{d=0}^{1}M(W_{d})\indent\text{and}\indent H_{0}(F(T^{\circ}))=M(W_{0}).
\]
Therefore,
\[
H^{\text{FI}}_{0}\left(H_{1}(F(T^{\circ}))\right)_{2}=0\indent\text{and}\indent H^{\text{FI}}_{0}\left(H_{0}(F(T^{\circ}))\right)_{1}=0,
\]
so
\[
E^{2}_{3, 1}[T^{\circ}](6)=\text{Ind}^{S_{6}}_{S_{4}\times S_{2}}\mathcal{T}_{4}\boxtimes H^{\text{FI}}_{0}\left(H_{1}(F(T^{\circ}))\right)_{2}=0,
\]
and 
\[
E^{2}_{4, 0}[T^{\circ}](6)=\text{Ind}^{S_{6}}_{S_{5}\times S_{1}}\mathcal{T}_{5}\boxtimes H^{\text{FI}}_{0}\left(H_{0}(F(T^{\circ}))\right)_{1}=0.
\]
Thus for $r\ge 2$,
\[
E^{r}_{3, 1}[T^{\circ}](6)=0,
\]
and
\[
E^{r}_{4, 0}[T^{\circ}](6)=0.
\]
If $E^{3}_{2,2}[T^{\circ}](6)=0$, then, for $r\ge 3$, all differentials $d^{r}$ into $E^{r}_{-1, 4}[T^{\circ}](6)$ have trivial image. Then, as $p+q+2=-1+4+2\le6=n$, Proposition \ref{arcresprop} would show that
\[
d^{2}\left(E^{2}_{1,3}[T^{\circ}](6)\right)=E^{2}_{-1, 4}[T^{\circ}](6).
\]
This, along with Proposition \ref{d2isFIM}, which states that the differential $d^{2}$ from the $1^{\text{st}}$ column to the $-1^{\text{st}}$ column corresponds to FIM$^{+}$-morphism induced from $[4]\hookrightarrow [6]$, would prove that $\mathcal{W}_{2}^{T^{\circ}}(6)$ is generated as an FIM$^{+}$-module from $\mathcal{W}_{2}^{T^{\circ}}(4)$.

We will show that the subset of $E^{2}_{4,1}[T^{\circ}](6)$, given by $t^{2}\left(E^{2}_{2,0}[\R^{2}](3)\otimes E^{2}_{1,1}[T^{\circ}](3)\right)$ surjects onto $E^{2}_{2,2}[T^{\circ}](6)$ under $d^{2}$, proving that $E^{3}_{2,2}[T^{\circ}](6)=0$.

The $0^{\text{th}}$-column of the $E^{2}$-page of the arc resolutions spectral sequence is zero, so the $d^{2}$-differential to the $0^{\text{th}}$-column of the arc resolution spectral sequence, 
\[
d^{2}:E^{2}_{2,0}[M](3)\to E^{2}_{0,1}[M](3)=0,
\]
must be the zero map for all manifolds $M$. In particular
\[
d^{2}:E^{2}_{2,0}[\R^{2}](3)\to E^{2}_{0,1}[\R^{2}](3)
\]
is the zero map. Proposition \ref{W1} proves that 
\[
d^{2}:E^{2}_{1,1}[T^{\circ}](3)\to E^{2}_{-1,2}[T^{\circ}](3)
\]
is surjective. By Lemma \ref{Leibniz} the map $t^{2}$ satisfies a Leibniz rule, so
\[
d^{2}\left(t^{2}(E^{2}_{2,0}[\R^{2}](3)\otimes E^{2}_{1,1}[T^{\circ}](3))\right)=t^{2}(E^{2}_{2,0}[\R^{2}](3)\otimes E^{2}_{-1,2}[T^{\circ}](3)).
% = t^{2}\left(d^{2}(E^{2}_{2,0}[\R^{2}](3))\otimes E^{2}_{1,1}[T^{\circ}](3)\right)+t^{2}\left(E^{2}_{2,0}[\R^{2}](3)\otimes d^{2}(E^{2}_{1,1}[T^{\circ}](3))\right)
\]
Since $E^{2}_{2,2}[T^{\circ}](6)=\text{Ind}^{S_{6}}_{S_{3}\times S_{3}} \mathcal{T}_{3}\boxtimes H_{0}^{\emph{FI}}(H_{2}(F(T^{\circ})))_{3}$, $E^{2}_{2,0}[\R^{2}](3)= \mathcal{T}_{3}$, and $E^{2}_{-1,2}[T^{\circ}](3) = H_{0}^{\emph{FI}}(H_{2}(F(T^{\circ})))_{3}$, it follows from the definition of $t$ that
\[
t^{2}\left(E^{2}_{2,0}[\R^{2}](3)\otimes E^{2}_{-1,2}[T^{\circ}](3)\right)= E^{2}_{2,2}[T^{\circ}](6).
\]
Therefore,
\[
d^{2}\left(E^{2}_{4,1}[T^{\circ}](6)\right)= E^{2}_{2,2}[T^{\circ}](6),
\]
and $E^{3}_{2,2}[T^{\circ}](6) =0$. Thus, $d^{2}$ is the only nontrivial differential for $r\ge 2$ into $E^{r}_{-1, 4}[T^{\circ}](6)$, and $\mathcal{W}_{2}^{T^{\circ}}(6)$ is generated as an FIM$^{+}$-module from $\mathcal{W}_{2}^{T^{\circ}}(4)$. It follows that $\mathcal{W}_{2}^{T^{\circ}}(n)$ is generated in degree at most $4$ as an FIM$^{+}$-module.
\end{proof}

\begin{cor}
The sequence of minimal generators $\mathcal{W}_{2}^{T^{\circ}}$ is not a free FIM$^{+}$-module.
\end{cor}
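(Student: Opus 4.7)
The plan is to derive a contradiction from the assumption that $\mathcal{W}_2^{T^\circ}$ is a free FIM$^+$-module by computing, under that assumption, a dimension that would have to be negative. The two observations that drive the argument are: (i) $\mathcal{W}_2^{T^\circ}(n) = 0$ whenever $n$ is odd, since $\frac{n+2}{2}$ must be an integer; and (ii) $\mathcal{W}_2^{T^\circ}(0) = H_0^{\text{FI}}(H_1(F(T^\circ);\Q))_0 = H_1(F_0(T^\circ);\Q) = 0$, because $F_0(T^\circ)$ is a point.

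Assuming for contradiction that $\mathcal{W}_2^{T^\circ}$ is free, Theorem \ref{genThm} gives a decomposition $\mathcal{W}_2^{T^\circ} \cong \bigoplus_{d \le 4} M^{\text{FIM}^+}(V_d)$ with $V_d := H_0^{\text{FIM}^+}(\mathcal{W}_2^{T^\circ})_d$. By (i) and (ii), $V_0 = V_1 = V_3 = 0$; moreover, since $V_0 = V_1 = 0$ there are no nontrivial FIM$^+$-morphisms into $\mathcal{W}_2^{T^\circ}(2)$ from smaller degree, so $V_2 = \mathcal{W}_2^{T^\circ}(2)$. Next I would read the relevant dimensions off the FI\#-decomposition of Proposition \ref{topgen} using the Betti-number formulas of Proposition \ref{thetheorem}: the coefficient of $\binom{n}{2}$ in $b_2(F_n(T^\circ)) = 2\binom{n}{3} + 5\binom{n}{2}$ gives $\dim V_2 = \dim \mathcal{W}_2^{T^\circ}(2) = 5$, while the coefficient of $\binom{n}{4}$ in $b_3(F_n(T^\circ)) = 14\binom{n}{4} + 18\binom{n}{3}$ gives $\dim \mathcal{W}_2^{T^\circ}(4) = 14$.

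Finally, I would apply the dimension formula for free FIM$^+$-modules from Section \ref{stab}, which in degree $4$ yields
\[
\dim \mathcal{W}_2^{T^\circ}(4) \;=\; \binom{4}{2}\dim(V_2)\dim\bigl(M^{\text{FIM}^+}(0)_2\bigr) + \dim(V_4) \;=\; 6 \cdot 5 \cdot 1 + \dim(V_4) \;=\; 30 + \dim(V_4).
\]
Equating this to $14$ forces $\dim(V_4) = -16$, which is absurd. The only step that requires care is the identification $V_2 = \mathcal{W}_2^{T^\circ}(2)$, which rests squarely on the vanishing $V_0 = 0$; everything else is a direct numerical comparison, so there is no serious obstacle once Theorem \ref{genThm} and Proposition \ref{thetheorem} are in hand.
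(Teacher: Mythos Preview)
Your proposal is correct and follows essentially the same route as the paper. Both arguments use Theorem~\ref{genThm} and Proposition~\ref{thetheorem} to compute $\dim \mathcal{W}_2^{T^\circ}(2)=5$ and $\dim \mathcal{W}_2^{T^\circ}(4)=14$, then observe that the free summand $M^{\text{FIM}^+}(\mathcal{W}_2^{T^\circ}(2))$ would already contribute $\binom{4}{2}\cdot 5\cdot 1=30$ in degree~$4$; the paper states this as $M^{\text{FIM}^+}(\mathcal{W}_2^{T^\circ}(2))_4\not\subseteq \mathcal{W}_2^{T^\circ}(4)$, while you equivalently phrase it as $\dim V_4=-16$.
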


\begin{proof}

Theorem \ref{genThm} proves $\mathcal{W}_{2}^{T^{\circ}}$ is generated in degree at most $4$ and that $\mathcal{W}_{2}^{T^{\circ}}(4)\neq 0$. It follows from the definition of ordered configuration space that
\[
H_{1}(F_{0}(T^{\circ}),\Q)=0,
\]
so
\[
\mathcal{W}_{2}^{T^{\circ}}(0)=H_{0}^{\emph{FI}}\left(H_{1}(F(T^{\circ}),\Q)\right)_{0}=0.
\]
Proposition \ref{thetheorem} proves that
\[
H_{2}(F(T^{\circ}),\Q)_{1}=0\indent\text{and}\indent H_{2}(F(T^{\circ}),\Q)_{2}\neq0,
\]
so
\[
\mathcal{W}_{2}^{T^{\circ}}(2)=H_{0}^{\emph{FI}}\left(H_{2}(F(T^{\circ}),\Q)\right)_{2}\neq0.
\]
Therefore, to show that $\mathcal{W}_{2}^{T^{\circ}}$ is not free, it suffices to prove that $M^{\text{FIM}^{+}}(\mathcal{W}_{2}^{T^{\circ}}(2))\not\subseteq \mathcal{W}_{2}^{T^{\circ}}$.

Proposition \ref{thetheorem} proves that $\dim(\mathcal{W}_{2}^{T^{\circ}}(2))=5$, so 
\[
\dim(M^{\text{FIM}^{+}}(W_{2})_{4}) = \binom{4}{2}\times \dim(M^{\text{FIM}^{+}}(0))_{4-2} \times\dim(W_{2})= 6\times 1\times 5=30.
\]
Since, $\dim(\mathcal{W}_{2}^{T^{\circ}}(2)) = 14 <30$ it follows that $M^{\text{FIM}^{+}}(W_{2})_{4}\not\subseteq \mathcal{W}_{2}^{T^{\circ}}(4),$ and $\mathcal{W}_{2}^{T^{\circ}}$ is not a free FIM$^{+}$-module.
\end{proof}

This corollary, along with Theorem \ref{genThm}, provide an FIM$^{+}$-module version of Theorem \ref{mytheorem}. Moreover, they demonstrate the first example of a non-free, stably nonzero secondary representation stability sequence in positive genus, This is the first stably nonzero example for any manifold for which the homology groups of the ordered configuration spaces were not already explicitly known.

\bibliographystyle{amsalpha}
\bibliography{PuncturedTorusBib}

\providecommand{\bysame}{\leavevmode\hbox to3em{\hrulefill}\thinspace}
\providecommand{\MR}{\relax\ifhmode\unskip\space\fi MR }
% \MRhref is called by the amsart/book/proc definition of \MR.
\providecommand{\MRhref}[2]{%
  \href{http://www.ams.org/mathscinet-getitem?mr=#1}{#2}
}
\providecommand{\href}[2]{#2}
\begin{thebibliography}{CEF15}

\bibitem[CEF15]{church2015fi}
Thomas Church, Jordan~S Ellenberg, and Benson Farb, \emph{{FI}-modules and
  stability for representations of symmetric groups}, Duke Mathematical Journal
  \textbf{164} (2015), no.~9, 1833--1910.

\bibitem[Chu12]{church2012homological}
Thomas Church, \emph{Homological stability for configuration spaces of
  manifolds}, Inventiones mathematicae \textbf{188} (2012), no.~2, 465--504.

\bibitem[Coh10]{cohen2010introduction}
Frederick~R Cohen, \emph{Introduction to configuration spaces and their
  applications}, 2010.

\bibitem[KM15]{kupers2015improved}
Alexander Kupers and Jeremy Miller, \emph{Improved homological stability for
  configuration spaces after inverting 2.}, Homology, Homotopy \& Applications
  \textbf{17} (2015), no.~1.

\bibitem[MW19]{miller2019higher}
Jeremy Miller and Jennifer Wilson, \emph{Higher-order representation stability
  and ordered configuration spaces of manifolds}, Geometry \& Topology
  \textbf{23} (2019), no.~5, 2519--2591.

\bibitem[NSS19]{nagpal2019noetherianity}
Rohit Nagpal, Steven~V Sam, and Andrew Snowden, \emph{Noetherianity of some
  degree two twisted skew-commutative algebras}, Selecta Mathematica
  \textbf{25} (2019), no.~1, 4.

\bibitem[Pag20]{pagaria2020asymptotic}
Roberto Pagaria, \emph{Asymptotic growth of betti numbers of ordered
  configuration spaces on an elliptic curve}, arXiv preprint arXiv:2005.02106
  (2020).

\bibitem[RW13]{randal2013homological}
Oscar Randal-Williams, \emph{Homological stability for unordered configuration
  spaces}, The Quarterly Journal of Mathematics \textbf{64} (2013), no.~1,
  303--326.

\bibitem[Sin06]{sinha2006homology}
Dev Sinha, \emph{The homology of the little disks operad}, arXiv preprint
  math/0610236 (2006).

\bibitem[SS12]{sam2012introduction}
Steven~V Sam and Andrew Snowden, \emph{Introduction to twisted commutative
  algebras}, arXiv preprint arXiv:1209.5122 (2012).

\bibitem[Wil18]{wilson2018introduction}
Jenny Wilson, \emph{An introduction to {FI}--modules and their
  generalizations}, URL: http://www. math. lsa. umich. edu/\~{}
  jchw/FILectures. pdf (2018).

\end{thebibliography}

\end{document}